     \def\section{\@startsection{section}{1}%
     \z@{.7\linespacing\@plus\linespacing}{.5\linespacing}%
     {\bfseries%\normalfont\scshape
     \centering
     }}
     \def\@secnumfont{\bfseries}
\newtheorem{theorem}{Theorem}[section]
\newtheorem{lemma}[theorem]{Lemma}
\newtheorem{proposition}[theorem]{Proposition}
\theoremstyle{definition}
\newtheorem{definition}[theorem]{Definition}
\newtheorem{example}[theorem]{Example}
\theoremstyle{remark}
\newtheorem{remark}[theorem]{Remark}
\numberwithin{equation}{section}
\newcommand{\Hil}{\mathsf{H}}
\newcommand{\kil}{\mathsf{k}}
\newcommand{\Fock}{\mathcal{F}}
\newcommand{\Exp}{\mathcal{E}}
\newcommand{\e}[1]{\varepsilon (#1)}
\newcommand{\init}{\mathfrak{h}}
\newcommand{\noise}{\mathsf{k}}
\newcommand{\khat}{{\wh{\noise}}}
\newcommand{\ind}{\mathbf{1}}
\newcommand{\Alg}{\mathcal{A}}
\newcommand{\Blg}{\mathcal{B}}
\newcommand{\vM}{\mathcal{M}}
\newcommand{\vN}{\mathcal{N}}
\newcommand{\al}{\alpha}
\newcommand{\be}{\beta}
\newcommand{\ga}{\gamma}
\newcommand{\de}{\delta}
\newcommand{\si}{\sigma}
\newcommand{\Real}{\mathbb{R}}
\newcommand{\Rplus}{\Real_+}
\newcommand{\Comp}{\mathbb{C}}
\newcommand{\ip}[2]{\langle #1, #2 \rangle}
\newcommand{\norm}[1]{\lVert #1 \rVert}
\newenvironment{sbmatrix}
{\bigl[\begin{smallmatrix}}{\end{smallmatrix}\bigr]}
\newenvironment{spmatrix}
{\bigl(\begin{smallmatrix}}{\end{smallmatrix}\bigr)}
\newcommand{\bra}[1]{\langle #1 |}
\newcommand{\ket}[1]{| #1 \rangle}
\newcommand{\wh}{\widehat}
\newcommand{\ot}{\otimes}
\newcommand{\op}{\oplus}
\newcommand{\uot}{\underline{\otimes}}
\newcommand{\oot}{\overline{\otimes}}
\newcommand{\vot}{\oot}
\newcommand{\aot}{\uot}
\newcommand{\To}{\rightarrow}
\newcommand{\ges}{\geqslant}
\newcommand{\les}{\leqslant}
\newcommand{\cles}{\precsim}
\newcommand{\tu}{\textup}
\DeclareMathOperator{\Ran}{Ran}
\DeclareMathOperator{\Ker}{Ker}
\DeclareMathOperator{\id}{id}
\DeclareMathOperator{\Sub}{Sub}
\newenvironment{alist}
{

\begin{enumerate}}
{\end{enumerate}}
\newenvironment{rlist}
{

\begin{enumerate}}
{\end{enumerate}}
\newcounter{step_count}
\numberwithin{equation}{section}
\begin{document}

\title[$E$-semigroups Subordinate to CCR Flows]{$E$-semigroups
Subordinate to CCR Flows}

\author{Stephen J.~Wills}
\address{School of Mathematical Sciences, University College Cork,
Cork, Ireland}
\email{s.wills@ucc.ie}
\dedicatory{Dedicated to Robin Hudson on his 70th birthday --- an
inspirational academic grandfather.}

\subjclass[2000]{Primary 81S25; Secondary 46L55, 47D06}

\keywords{$E$-semigroup, cocycle, subordination, quantum stochastic
differential equation.}

\begin{abstract}
The subordinate $E$-semigroups of a fixed $E$-semigroup $\al$ are in
one-to-one correspondence with local projection-valued cocycles of
$\al$. For the CCR flow we characterise these cocycles in terms of
their stochastic generators, that is, in terms of the coefficient
driving the quantum stochastic differential equation of
Hudson-Parthasarathy type that such cocycles necessarily satisfy. In
addition various equivalence relations and order-type relations on
$E$-semigroups are considered, and shown to work especially well in
the case of those semigroups subordinate to the CCR flows by
exploiting our characterisation.
\end{abstract}

\maketitle

\section{Introduction}

Two strands of noncommutative analysis developed contemporaneously in
the 1980's: within the field of quantum probability there was great
interest in quantum stochastic calculi, and especially successful was
the bosonic calculus of Hudson and Parthasarathy (\cite{HuP}).
Meanwhile, Arveson and Powers considered the problem of understanding
\emph{endomorphism} or \emph{$E$-semigroups} on $B(\Hil)$, the algebra
of all bounded operators on a Hilbert space $\Hil$, thereby
generalising Wigner's well-known work for automorphism groups from the
1930's (\cite{Ebook}). Significant relations between the two were
explored by Bhat some time later (\cite{ccr}), exploiting the fact
that a central object for the quantum probabilists is an archetypal
example of an $E$-semigroup, namely the right shift semigroup on
(symmetric) Fock space, otherwise known as the \emph{CCR flow}. Bhat
in particular initiated the study of various order structures, looking
at \emph{local positive contraction cocycles} for the CCR flow since
these characterise the completely positive (CP) semigroups subordinate
to the given $E$-semigroup. In this paper we shall continue our study
of cocycles of the CCR flow from~\cite{jtl}, in particular studying
the order relation on orthogonal projection-valued cocycles, and
determining which of these cocycles are equivalent in a natural sense.

To make matters more concrete, we give some definitions.

\begin{definition}
Let $\Hil$ be a Hilbert space. An \emph{$E$-semigroup} on $B(\Hil)$ is
a family of maps $\al = (\al_t:B(\Hil) \To B(\Hil))_{t \ges 0}$ that
satisfy:
\begin{rlist}
\item
$\al_0 = \id_{B(\Hil)}$ and $\al_{s+t} = \al_s \circ \al_t$ for all
$s,t \ges 0$;
\item
$\al_t$ is an endomorphism of $B(\Hil)$, i.e. a ${}^*$-homomorphism,
for each $t \ges 0$;
\item
the map $S \mapsto \al_t(S)$ is normal for each $t \ges 0$, and the
map $t \mapsto \al_t (S)$ is continuous in the weak operator topology
for each $S \in B(\Hil)$.
\end{rlist}
If, in addition, we have
\begin{rlist}
\item[(iv)]
$\al_t (I) = I$ for all $t$, where $I$ is the identity operator on
$\Hil$,
\end{rlist}
then $\al$ is called an \emph{$E_0$-semigroup}.
\end{definition}

The theory of these has proved to be significantly more tricky than
the subclass of \emph{automorphism} semigroups; dropping the
requirement of Wigner that $\Ran \al_t = B(\Hil)$ is a nontrivial
business. Recent work of Powers et al.\ (\cite{poNY, app, 2-1}) has
established a new route to the construction of \emph{spatial}
$E_0$-semigroups through the study of CP flows, and as with Bhat's
work, local cocycles and subordination are a key ingredient.

\begin{definition}
Let $\Hil$ be a Hilbert space and let $\al, \be:B(\Hil) \To B(\Hil)$
be a pair of completely positive maps. We say that $\al$
\emph{dominates} $\be$, or, equivalently, that $\be$ \emph{is
subordinate to} $\al$, if their difference $\al-\be$ is a completely
positive map, and denote this $\al \ges \be$, or $\be \les \al$.

Similarly, if instead $\al = (\al_t)_{t \ges 0}$ and $\be = (\be_t)_{t
\ges 0}$ are $E$-semigroups, then $\be$ is a \emph{subordinate} of
$\al$, written $\be \les \al$, if $\al_t - \be_t$ is completely
positive for all $t \ges 0$.
\end{definition}

The notion of domination or subordination goes back at least as far as
work of Arveson. More recent generalisations include subordination in
the theory of completely positive definite kernels (i.e.\ maps $k:S
\times S \To B(\Alg;\Blg)$ satisfying a suitable positivity
requirement, where $S$ is a set and $\Alg$, $\Blg$ are $C^*$-algebras)
and semigroups of these (\cite{beast}, Section~3.3). A
characterisation of the subordinates of a given CP map $\al$ is the
content of Arveson's Radon-Nikod\'{y}m Theorem (\cite{subalg},
Theorem~1.4.2). In this paper we will look only at subordination of
one $E$-semigroup by another, which gives the opportunity to provide
the following short proof of Arveson's result in this setting.

\begin{lemma}\label{subord maps}
Let $\Hil$ be a Hilbert space and let $\al, \be$ be a pair of
endomorphisms on $B(\Hil)$. The following are equivalent\tu{:}
\begin{rlist}
\item
$\al \ges \be$\tu{;}
\item
$\be (I) \in \al \bigl(B(\Hil)\bigr)'$ and $\be (S) = \be(I) \al(S)$
for all $S \in B(\Hil)$.
\end{rlist}
Furthermore, given any projection $P \in \al \bigl(B(\Hil)\bigr)'$ the
map $\ga: S \mapsto P\al(S)$ is a subordinate of $\al$.
\end{lemma}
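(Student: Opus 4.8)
The plan is to prove the equivalence (i) $\Leftrightarrow$ (ii) and then deduce the final ``furthermore'' claim about projections $P \in \al(B(\Hil))'$. For the direction (ii) $\Rightarrow$ (i), suppose $\be(S) = \be(I)\al(S)$ with $\be(I)$ commuting with the range of $\al$. Writing $P := \be(I)$, which is necessarily a positive contraction in $\al(B(\Hil))'$ (positivity and contractivity follow since $\be$ is a CP contraction, or at least we extract this from $\be$ being an endomorphism -- note $\be(I)$ is a projection as $\be(I)=\be(I)^2$), the difference $\al - \be$ sends $S \mapsto \al(S) - P\al(S) = (I-P)\al(S)$. Since $I - P$ is a projection commuting with $\al(B(\Hil))$, the map $S \mapsto (I-P)\al(S) = (I-P)\al(S)(I-P)$ is a compression of the ${}^*$-homomorphism $\al$ by a projection in its commutant, hence completely positive; thus $\al \ges \be$.

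For (i) $\Rightarrow$ (ii), assume $\al - \be$ is completely positive. The key structural input is that $\al$ is an endomorphism, so $\al(I)$ is a projection and $\al$ restricts to a unital ${}^*$-homomorphism onto the corner $\al(I)B(\Hil)\al(I)$. First I would show $0 \les \be(I) \les \al(I)$ and, using that $\be$ is also an endomorphism (so $\be(I)$ is a projection) together with complete positivity of $\al - \be$, pin down how $\be$ interacts with the multiplicative structure. The cleanest route: apply the Schwarz-type inequality available for the CP map $\al - \be$ to elements of the form $S - \al(S)$ or exploit that for a projection $Q \les I$ one has $\be(Q) \les \al(Q)$ with both sides projections, forcing $\be(Q)\al(Q) = \be(Q)$; then polarise and use linearity and ${}^*$-homomorphism properties of $\al$ and $\be$ to get $\be(S) = \be(I)\al(S)$ for all $S$. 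Commutation $\be(I) \in \al(B(\Hil))'$ then drops out: for any $S$, $\be(I)\al(S) = \be(S) = \be(S^*)^* = (\be(I)\al(S^*))^* = \al(S)\be(I)$, using that $\be(I)$ is self-adjoint.

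The final assertion is then immediate from the (ii) $\Rightarrow$ (i) argument run in reverse: given a projection $P \in \al(B(\Hil))'$, define $\ga(S) = P\al(S)$. Since $P$ commutes with $\al(B(\Hil))$ and $P = P^* = P^2$, we have $\ga(S) = P\al(S)P$, a compression of the endomorphism $\al$, which is completely positive and contractive; moreover $\ga(I) = P\al(I)$, and $\al(S) - \ga(S) = (I-P)\al(S) = (I-P)\al(S)(I-P)$ is likewise completely positive, so $\al \ges \ga$, i.e.\ $\ga$ is a subordinate of $\al$. I expect the main obstacle to be the polarisation/multiplicativity step in (i) $\Rightarrow$ (ii) --- getting from the inequality $\be(Q) \les \al(Q)$ on projections to the exact identity $\be(S) = \be(I)\al(S)$ on all of $B(\Hil)$ --- since this is where the endomorphism hypotheses on \emph{both} $\al$ and $\be$ must be used in an essential way rather than merely complete positivity.
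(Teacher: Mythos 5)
Your (ii) $\Rightarrow$ (i) direction and the final assertion about $\ga(S)=P\al(S)$ are correct and are exactly the paper's argument: $\al(S)-\ga(S)=P^\perp\al(S)P^\perp$ is a compression of the homomorphism $\al$ by a projection in its commutant, hence CP. The problem is the hard direction (i) $\Rightarrow$ (ii), and the gap is precisely where you flag it. From $\be(Q)\les\al(Q)$ for a projection $Q$ (both sides being projections) you correctly get $\be(Q)=\be(Q)\al(Q)$, but this identity is quadratic in $Q$ --- it has $\be(Q)$, not the fixed operator $\be(I)$, on the left --- so ``polarise and use linearity'' does not apply to it, and nothing in your sketch bridges the passage to $\be(Q)=\be(I)\al(Q)$; the alternative you mention (a Schwarz inequality applied to $S-\al(S)$) is not developed at all. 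The bridge does exist and is short: applying the same observation to $Q^\perp$ gives $\be(Q^\perp)=\be(Q^\perp)\al(Q^\perp)$, so by multiplicativity of $\al$, $\be(Q^\perp)\al(Q)=\be(Q^\perp)\al(Q^\perp)\al(Q)=\be(Q^\perp)\al(Q^\perp Q)=0$, whence $\be(I)\al(Q)=\be(Q)\al(Q)+\be(Q^\perp)\al(Q)=\be(Q)$. This identity is linear in $Q$, so norm-density of the linear span of projections in $B(\Hil)$ together with boundedness of $\al$ and $\be$ gives $\be(S)=\be(I)\al(S)$ for all $S$, and your adjoint argument then yields $\be(I)\in\al\bigl(B(\Hil)\bigr)'$. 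Note that this repaired route uses only positivity of $\al-\be$, but multiplicativity of both $\al$ and $\be$.

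For comparison, the paper proves (i) $\Rightarrow$ (ii) by a $2\times2$ operator-matrix argument: apply $(\al-\be)^{(2)}$ to the positive matrix with entries $I, S, S^*, S^*S$, conjugate by $\diag(\be(I),I)$, observe that the top-left entry of the resulting positive matrix vanishes because $\be(I)\les\al(I)$ are projections, and conclude that the off-diagonal entries vanish, giving $\be(I)\al(S)=\be(S)=\al(S)\be(I)$ in one stroke; this uses only $2$-positivity of the difference, a point the paper remarks on. So with the missing step supplied, your projection-based argument is a valid and genuinely different proof (closer in spirit to Bhat's positivity-only result cited in the paper's remark); as written, however, the key step of the essential implication is missing.
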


\begin{remark}
Throughout, unless otherwise specified, or obvious, all projections
on Hilbert spaces will be \emph{orthogonal} projections.
\end{remark}

\begin{proof}
Suppose that $\al \ges \be$. Let $S \in B(\Hil)$ then
\begin{align*}
0 & \les \begin{bmatrix} I & S^* \end{bmatrix} \begin{bmatrix} I \\ S
\end{bmatrix} = \begin{bmatrix} I & S \\ S^* & S^*S \end{bmatrix} \\
\intertext{and since the difference $\al-\be$ is
$2$-positive we have}
0 & \les \begin{bmatrix} \be(I) & 0 \\ 0 & I \end{bmatrix}^*
(\al-\be)^{(2)} \biggl( \begin{bmatrix} I & S \\ S^* & S^*S
\end{bmatrix} \biggr) \begin{bmatrix} \be(I) & 0 \\ 0 & I \end{bmatrix}
\\
&= \begin{bmatrix} \be(I)\al(I)\be(I) -\be(I)^3 & \be(I) \bigl(
\al(S)-\be(S) \bigr) \\ \bigl( \al(S^*)-\be(S^*) \bigr) \be(I) &
\al(S^*S) -\be(S^*S) \end{bmatrix} =: R.
\end{align*}
But $\al(I) \ges \be(I)$, and these operators are projections, hence
\[
\be(I)\al(I)\be(I) = \be(I) = \be(I)^3.
\]
Thus the top-left hand corner of the positive $2 \times 2$ operator
matrix $R$ is zero. Writing this as $R = T^*T$ for $T = \left[
\begin{smallmatrix} A & B \\ C & D \end{smallmatrix} \right]$, where
$A,B,C,D \in B(H)$, it follows that $A = C = 0$, and so the
off-diagonal entries of $R$ are also zero. Hence $\be(I) \al(S) =
\be(S) = \al(S) \be(I)$ and thus~(i) implies~(ii).

That (ii) implies (i) follows from the final part of the lemma
concerning the map $\ga$, and this follows since for such projections
$P$ we have
\[
\al(S) - \ga(S) = P^\perp \al(S) P^\perp,
\]
so that $\al-\ga$ is CP.
\end{proof}

\begin{remark}
(a) In proving that (i) implies (ii) we do not use the full force of
the assumption that $\al-\be$ is CP, but in fact only require that
this difference be $2$-positive (and can then conclude that the
difference is in fact CP). Such a feature is also found in Bhat's
generalisation of this lemma (\cite{ccr}, Proposition~4.2) where he
starts with a unital endomorphism $\al$ and a CP map $\be$, and,
assuming only that $\al-\be$ is positive, shows that the difference is
CP, with $\be(S) = \be(I) \al(S)$ where $\be(I) \in \al \bigl( B(H)
\bigr)'$ is a positive contraction.

(b) In the above, if $\al$ and $\be$ are endomorphisms with $\al \ges
\be$, then $\be$ is determined by the value of the projection
$\be(I)$. On the other hand, unless $\al(I) = I$, different
projections in $\al \bigl( B(\Hil) \bigr)'$ can yield the same
subordinate of $\al$. Indeed, if $\ga_1(S) = P_1 \al(S)$ and $\ga_2(S)
= P_2 \al(S)$ are subordinates of $\al$ defined by projections $P_1,
P_2 \in \al \bigl( B(\Hil) \bigr)'$ then $\ga_1 = \ga_2$ if and only
if there are projections $p \les \al(I)$ and $q_1, q_2 \les
\al(I)^\perp$ such that $P_1 = p+q_1$ and $P_2 = p+q_2$.
\end{remark}

\begin{example}
Let $\Hil$ be a Hilbert space and $L \in B(\Hil)$ an isometry. Let
$\al(S) := LSL^*$, an endomorphism of $B(\Hil)$. Projections $P \in
\al \bigl( B(\Hil) \bigr)'$ are either of the form $LL^*+Q$ or $Q$
where $Q$ is a projection with $Q \les \al(I)^\perp$. The
corresponding subordinates of $\al$ are $\al$ and $0$ respectively.
\end{example}

To apply the lemma to $E$-semigroups in Theorem~\ref{subord Esg} below
(which is a special case of Theorem~4.3 of~\cite{ccr}) we require a
little more terminology.

\begin{definition}
Let $\Hil$ be a Hilbert space and $\al$ an $E$-semigroup on $B(\Hil)$.
A family of contraction operators $X = (X_t)_{t \ges 0}$ on $\Hil$ is
a \emph{left $\al$-cocycle} if
\begin{rlist}
\item
$X_0 = I$;
\item
$X_{s+t} = X_s \al_s (X_t)$ for all $s,t \ges 0$;
\item
the maps $t \mapsto X_t$ and $t \mapsto X^*_t$ are continuous in the
strong operator topology.
\end{rlist}
If, instead, (ii) is replaced by
\begin{rlist}
\item[(ii)$'$] $X_{s+t} = \al_s (X_t) X_s$ for all $s,t \ges 0$
\end{rlist}
then we speak of a \emph{right $\al$-cocycle}. A \emph{local
$\al$-cocycle} is a cocycle for which $X_t \in \al_t
\bigl(B(\Hil)\bigr)'$ for each $t \ges 0$ and so is both a left and
right cocycle.
\end{definition}

\begin{remark}
(a) Contractivity of each $X_t$ is not always assumed; in~\cite{aunt}
we considered cocycles for which the $X_t$ can be \emph{unbounded} (in
which case the continuity condition~(iii) is altered appropriately).
However, the contractivity restriction here is appropriate for this
paper.

(b) Typically it is only strong continuity of $t \mapsto X_t$ that is
assumed, and the class of cocycles is restricted (e.g.\ to
unitary-valued cocycles). The continuity required above is
automatically true for weak operator measurable isometry-valued
cocycles (\cite{Ebook}, Proposition~2.3.1) and weak operator
measurable positive contraction cocycles (\cite{ccr}, Appendix~A) on
separable Hilbert spaces; also for local cocycles of the CCR flow that
are weakly continuous at $t=0$ (\cite{jtl}, Proposition~2.1) on
arbitrary Hilbert spaces.

(c) Although a local cocycle is both a left and right cocycle, the
converse is not true as can easily be seen from Proposition~3.1
of~\cite{jtl}.
\end{remark}

\begin{theorem}\label{subord Esg}
Let $\Hil$ be a Hilbert space and let $\al$ and $\be$ be
$E$-semigroups on $B(\Hil)$. The following are equivalent\tu{:}
\begin{rlist}
\item
$\al \ges \be$\tu{;}
\item
$B := (B_t = \be_t(I))_{t \ges 0}$ is a projection-valued, local
$\al$-cocycle such that $\be_t(S) = B_t \al_t(S)$ for all $S \in
B(\Hil)$.
\end{rlist}
Moreover, if $C = (C_t)_{t \ges 0}$ is any projection-valued, local
$\al$-cocycle, then $\ga_t(S) := C_t \al_t(S)$ defines an
$E$-semigroup subordinate to $\al$.
\end{theorem}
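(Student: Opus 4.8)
The plan is to reduce everything to a pointwise-in-$t$ application of Lemma~\ref{subord maps}: for each fixed $t \ges 0$ the maps $\al_t, \be_t$ are endomorphisms of $B(\Hil)$, so the lemma applies verbatim, and the real content of the theorem is in carrying the cocycle structure and the continuity requirements through this pointwise statement.

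\emph{(i)$\Rightarrow$(ii).} Fix $t$. Since $\al_t \ges \be_t$, Lemma~\ref{subord maps} gives $B_t := \be_t(I) \in \al_t\bigl(B(\Hil)\bigr)'$ and $\be_t(S) = B_t\al_t(S)$ for all $S$; moreover $B_t$ is a projection because $\be_t$ is a ${}^*$-homomorphism. It remains to check that $B = (B_t)_{t \ges 0}$ is a local $\al$-cocycle. Normalisation $B_0 = \be_0(I) = I$ is clear. For the cocycle identity I would exploit the semigroup law in both orders, $\be_{s+t} = \be_s \circ \be_t = \be_t \circ \be_s$: the first gives $B_{s+t} = \be_s(B_t) = B_s\al_s(B_t)$, the left $\al$-cocycle identity; the second gives $B_{s+t} = \be_t(B_s) = B_t\al_t(B_s) = \al_t(B_s)B_t$, the last step because $B_t$ commutes with $\al_t\bigl(B(\Hil)\bigr)$, and after relabelling this is the right $\al$-cocycle identity. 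Since also $B_t \in \al_t\bigl(B(\Hil)\bigr)'$, $B$ is local. For continuity: $t \mapsto B_t = \be_t(I)$ is weakly continuous by the continuity axiom for $\be$, and as each $B_t$ is a projection, $\ip{B_t\xi}{\xi} = \norm{B_t\xi}^2$, so weak convergence of $B_t$ forces convergence of $\norm{B_t\xi}$ for every $\xi$ and hence strong convergence; being self-adjoint, $t \mapsto B_t^*$ is then strongly continuous too.

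\emph{(ii)$\Rightarrow$(i) and the final clause.} These are essentially the same observation. If $C = (C_t)_{t\ges 0}$ is a projection-valued local $\al$-cocycle and $\ga_t(S) := C_t\al_t(S)$, then since $C_t \in \al_t\bigl(B(\Hil)\bigr)'$ the final assertion of Lemma~\ref{subord maps} shows $\al_t - \ga_t$ is CP; taking $C = B$ yields (ii)$\Rightarrow$(i), so all that is left is to verify that $\ga = (\ga_t)_{t \ges 0}$ is an $E$-semigroup. Here $\ga_0 = \id_{B(\Hil)}$ since $C_0 = I$; the semigroup law $\ga_s \circ \ga_t = \ga_{s+t}$ follows from $\al_s(C_t\al_t(S)) = \al_s(C_t)\al_{s+t}(S)$ together with the left cocycle identity $C_{s+t} = C_s\al_s(C_t)$; and each $\ga_t$ is a ${}^*$-homomorphism because $C_t^2 = C_t = C_t^*$ and $C_t$ commutes with $\al_t\bigl(B(\Hil)\bigr)$, giving $\ga_t(S)\ga_t(T) = C_t\al_t(S)C_t\al_t(T) = C_t\al_t(ST) = \ga_t(ST)$ and $\ga_t(S)^* = \al_t(S)^*C_t = C_t\al_t(S^*) = \ga_t(S^*)$. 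Normality of $\ga_t$ is inherited from that of $\al_t$ since left multiplication by the fixed operator $C_t$ is $\si$-weakly continuous; and weak continuity of $t \mapsto \ga_t(S)$ follows by writing $\ip{C_t\al_t(S)\xi}{\eta} = \ip{\al_t(S)\xi}{C_t\eta}$ and combining the strong continuity of $t \mapsto C_t$ (part of the definition of cocycle) with the weak continuity of $t \mapsto \al_t(S)$ and the uniform bound $\norm{\al_t(S)} \les \norm{S}$.

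I expect the only mildly delicate points to be extracting the right-cocycle identity from the semigroup law — where the commutation $B_t \in \al_t\bigl(B(\Hil)\bigr)'$ is exactly what does the symmetrising — and the two continuity arguments (the weak-to-strong upgrade for a projection-valued family, and weak continuity of the product $C_t\al_t(S)$); neither is a genuine obstacle, since the algebraic heart of the statement is already packaged in Lemma~\ref{subord maps}.
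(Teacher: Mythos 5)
Your proposal is correct and takes essentially the same route as the paper: a pointwise-in-$t$ application of Lemma~\ref{subord maps}, with the cocycle identity extracted from the semigroup law and the remaining cocycle/$E$-semigroup axioms checked directly (the paper's own proof is just a sketch of these verifications). The only cosmetic difference is in the weak-to-strong continuity upgrade, which you obtain from $\ip{B_t\xi}{\xi} = \norm{B_t\xi}^2$ for projections while the paper invokes the monotonicity $B_s \les B_t$ for $s \ges t$ together with weak continuity; both arguments are valid.
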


\begin{proof}
Using Lemma~\ref{subord maps}, the proof is now just a matter of
checking that definitions hold. For example if (i) holds then each
$B_t = \be_t(I) \in \al_t \bigl(B(\Hil)\bigr)'$ and is
projection-valued, with
\[
B_{s+t} = \be_{s+t} (I) = \be_s \bigl(\be_t(I)\bigr) = \be_s (I) \al_s
\bigl(\be_t(I)\bigr) = B_s \al_s (B_t).
\]
Since $B_s \les B_t$ whenever $s \ges t$, and $t \mapsto B_t$ is weak
operator continuous, it is strong operator continuous and thus a local
$\al$-cocycle.
\end{proof}

\begin{remark}\label{subord E rem}
(a) The lack of injectivity for single maps noted after
Lemma~\ref{subord maps} does not arise for $E$-semigroups and
cocycles: if $P$ is a projection-valued, local $\al$-cocycle then
\[
P_t = P_t \al_t(P_0) = P_t \al_t (I) \ \text{ so } \ P_t \les \al_t
(I).
\]

(b) It follows from this result that the only $E_0$-semigroup
subordinate to an $E_0$-semigroup $\al$ is $\al$ itself.
\end{remark}

\begin{example}
Let $\Hil$ be a Hilbert space and $L = (L_t)_{t \ges 0}$ a strongly
continuous semigroup of isometries on $\Hil$. Put $\al_t (S) := L_t S
L^*_t$, then $\al$ is an $E$-semigroup. The only projection-valued,
local $\al$-cocycle is $(\al_t(I))_{t \ges 0}$, so that the only
subordinate $E$-semigroups of $\al$ is $\al$ itself.
\end{example}

\section{Cocycles of CCR flows}

The most understood class of $E$-semigroups are the CCR and CAR flows;
these comprise the type I examples, with the work of Powers et al.\ 
(\cite{poNY, app, 2-1}) designed to yield new examples of type II
semigroups. Here we revert to a consideration of the CCR flow,
bringing stochastic methods to bear.

Let $\init$ and $\noise$ be a pair of Hilbert spaces, called the
\emph{initial space} and \emph{noise dimension space} respectively.
For each measurable set $I \subset [0,\infty[$ let $\Fock_I$ denote
the symmetric/bosonic Fock space over $L^2(I;\noise)$, with
$\Fock_{\Rplus}$ abbreviated to $\Fock$, so that
\[
\Fock \cong \Fock_I \ot \Fock_{I^c}.
\]
This isomorphism is conveniently effected via the correspondence
\[
\e{f} \longleftrightarrow \e{f|_I} \ot \e{f|_{I^c}}
\]
defined in terms of the useful total set of exponential vectors:
\[
\Exp_S := \{\e{f}: f \in S\}, \text{ where } \e{f} :=
(1,f,(2!)^{-1/2} f \ot f, (3!)^{-1/2} f \ot f \ot f, \ldots),
\]
and where $S$ is any sufficiently large subset of $L^2(\Rplus;
\noise)$. For the purposes of this paper we shall write $\Exp$ for
$\Exp_S$ in the case that $S$ is the set of (right-continuous) step
functions in $L^2(\Rplus; \noise)$. The natural unitary isomorphism
$L^2(\Rplus; \noise) \cong L^2([t,\infty[; \noise)$ gives rise to the
identification
\[
\Fock \cong \e{0} \ot \Fock_{[t,\infty[} \subset \Fock_{[0,t[} \ot
\Fock_{[t,\infty[} \cong \Fock,
\]
and thence the right-shift map $\si^\noise_t$ on $B(\Fock)$ that has
image $I_{[0,t[} \ot B(\Fock_{[t,\infty[})$. This map is a normal
${}^*$-homomorphism. The CCR flow determined by $\init$ and $\noise$
is then
\[
\si^{\init,\noise} = \bigl(\si_t^{\init,\noise} := \id_{B(\init)} \vot
\, \si^\noise_t \bigr)_{t \ges 0}.
\]
[We now clarify tensor product notation: $\aot$ denotes the algebraic
tensor product, $\ot$ is used for the tensor product of Hilbert spaces
and the spatial tensor product, whereas $\vot$ denotes the ultraweak
tensor product.]

In general we will drop the dependence of the CCR flow on $\init$ and
$\noise$ in what follows, referring, for example, to $\si$-cocycles.

\begin{definition}
A $\si$-cocycle $X$ is:
\begin{alist}
\item
\emph{adapted} if $X_t \in B(\init \ot \Fock_{[0,t[}) \ot
I_{[t,\infty[}$ for all $t \ges 0$,
\item
\emph{Markov-regular} if the expectation semigroup $T_t := \mathbb{E}
[X_t]$ on $\init$ is norm-continuous, where the vacuum conditional
expectation $\mathbb{E}$ is defined by
\[
\ip{u}{\mathbb{E} [S]v} = \ip{u \ot \e{0}}{S v \ot \e{0}}, \quad u,v
\in \init.
\]
\end{alist}
\end{definition}

\begin{remark}
A $\si$-cocycle $X$ is local if $X_t \in I_{\init} \ot
B(\Fock_{[0,t[}) \ot I_{\Fock_{[t,\infty[}}$ for each $t$, which is a
strictly stronger condition than adaptedness whenever $\dim \init >
1$.
\end{remark}

The following combines Theorem~6.6 of~\cite{aunt} with Theorem~7.5
of~\cite{mother}. We define $\khat := \Comp \op \noise$, and $\Delta
:= I_\init \ot P$, where $P$ is the projection $\khat \To \{0\} \op
\noise$.

\begin{theorem}\label{cocqsde}
There is a one-to-one correspondence between the set of
Markov-regular, contraction, left, adapted $\si$-cocycles and
\[
C_0(\init, \noise) := \{F \in B(\init \ot \khat): F +F^* +F^* \Delta F
\les 0\},
\]
under which cocycles $X$ are associated to their \emph{stochastic
generator} $F$ through $X \longleftrightarrow X^F$, where $X^F$ is the
unique solution of the \emph{left Hudson-Parthasarathy quantum
stochastic differential equation}
\begin{equation}\label{qsde}
X_t = I + \int^t_0 \wh{X_s} (F \ot I_\Fock) \, d\Lambda_s,
\end{equation}
with $\wh{X_s}$ denoting the ampliation of $X_s \in B(\init \ot
\Fock)$ to all of $B(\init \ot \khat \ot \Fock)$.
\end{theorem}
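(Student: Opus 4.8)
The plan is to prove the two directions of the correspondence separately. For the direction ``generator $\To$ cocycle'' I would start from an arbitrary $F \in B(\init \ot \khat)$ and construct $\XF$ as the unique solution of~\eqref{qsde} by Picard--It\^o iteration: set $X^{(0)}_t = I$ and $X^{(n+1)}_t = I + \int^t_0 \wh{X^{(n)}_s}(F \ot I_\Fock)\,d\Lambda_s$. The standard norm estimate for an iterated quantum stochastic integral tested against exponential vectors of step functions shows that $(X^{(n)})$ converges, uniformly on compact time intervals and on the dense domain $\init \aot \Exp$, to a weakly regular adapted process $\XF$; the same estimate applied to a difference gives uniqueness. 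Adaptedness is built into the construction, and the left-cocycle identity $\XF_{s+t} = \XF_s \si_s(\XF_t)$ then falls out of uniqueness: for fixed $s$, both $t \mapsto \XF_{s+t}$ and $t \mapsto \XF_s \si_s(\XF_t)$ solve~\eqref{qsde} with the time-shifted quantum stochastic integrators and initial value $\XF_s$, hence coincide. Sandwiching~\eqref{qsde} between vacuum vectors shows that the expectation semigroup $T_t = \mathbb{E}[\XF_t]$ satisfies $T_t = I + \int^t_0 T_s\,G\,ds$, with $G \in B(\init)$ the top-left block of $F$ in the decomposition induced by $\khat = \Comp \op \noise$; hence $T_t = e^{tG}$ is norm continuous and $\XF$ is Markov-regular.

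The analytic heart of this direction is the equivalence of contractivity of $\XF$ with the defining condition of $C_0(\init, \noise)$. Writing $\xi_s := u \ot \bigl(1, f(s)\bigr) \ot \e{f} \in \init \ot \khat \ot \Fock$ for the $\khat$-lifted vector attached to $u \ot \e{f}$ (with $u \in \init$ and $f$ a step function), the second fundamental formula of quantum stochastic calculus yields
\[
\norm{\XF_t (u \ot \e{f})}^2 = \norm{u \ot \e{f}}^2 + \int^t_0 \bip{\wh{\XF_s}\,\xi_s}{\bigl((F + F^* + F^* \Delta F) \ot I_\Fock\bigr)\,\wh{\XF_s}\,\xi_s}\,ds .
\]
If $F \in C_0(\init, \noise)$ the integrand is pointwise $\les 0$, so $t \mapsto \norm{\XF_t(u \ot \e{f})}^2$ is non-increasing and each $\XF_t$ is contractive on the total set $\Exp$, hence on all of $\init \ot \Fock$; moreover continuity (indeed differentiability) of $t \mapsto \norm{\XF_t(u \ot \e{f})}^2$ together with weak regularity upgrades to strong continuity of $t \mapsto \XF_t$ and of its adjoint, so $\XF$ is genuinely a cocycle. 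Conversely, if each $\XF_t$ is a contraction, take $f$ any step function with $f(0) = c \in \noise$, divide the displayed identity by $t$, and let $t \downarrow 0$; since $\wh{\XF_0} = I$ this yields $\bip{u \ot (1,c)}{(F + F^* + F^* \Delta F)(u \ot (1,c))} \les 0$ for all $u \in \init$, $c \in \noise$. As the left side is a quadratic function of $c$, a routine argument (vary $c$ over $\noise$ and use the rescalings $c \mapsto \lam c$, $\lam \in \Comp$) promotes this to $F + F^* + F^* \Delta F \les 0$ on all of $\init \ot \khat$. The same $t \downarrow 0$ computation, applied to matrix elements rather than norms, recovers $F$ from $\XF$, so the map $F \mapsto \XF$ is injective.

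For the reverse direction --- every Markov-regular, contraction, left, adapted $\si$-cocycle $X$ equals $\XF$ for a (necessarily unique) $F$, which then lies in $C_0(\init, \noise)$ by the computation above --- I would pass to the semigroups associated with $X$: for $c, d \in \noise$ the family built from $s \mapsto \ip{u \ot \e{c\ind_{[0,s[}}}{X_s\,(v \ot \e{d\ind_{[0,s[}})}$, suitably normalised, defines (via the cocycle identity and adaptedness, together with the factorisation of Fock space and of exponential vectors) a one-parameter semigroup $\Qcd$ of bounded operators on $\init$, along with the amplifications needed to control $X^*X$ and $XX^*$. The decisive point is that Markov-regularity --- norm continuity of the single semigroup $Q^{0,0} = (\mathbb{E}[X_t])_{t \ges 0}$ --- forces norm continuity of \emph{all} the $\Qcd$; this is the substance of Theorem~7.5 of~\cite{mother}. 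Granting it, the bounded generators of the $\Qcd$ assemble, by the standard reconstruction of a quantum stochastic coefficient from its matrix of generators, into a single $F \in B(\init \ot \khat)$; one then checks that $X$ and $\XF$ have identical associated semigroups and invokes the uniqueness established in the first part to conclude $X = \XF$. This is, in effect, the synthesis of Theorem~6.6 of~\cite{aunt} with Theorem~7.5 of~\cite{mother}.

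I expect the main obstacle to be precisely this last implication: upgrading the a priori \emph{weak} regularity of a general Markov-regular cocycle to genuine stochastic differentiability with a \emph{bounded} coefficient --- that is, showing that norm continuity of the vacuum expectation semigroup propagates to all of the associated semigroups $\Qcd$. Everything else --- Picard--It\^o existence and uniqueness, the cocycle identity from uniqueness, the It\^o computation characterising contractivity, and norm continuity of $T$ --- is routine quantum stochastic calculus once the bookkeeping with step-function exponential vectors has been set up.
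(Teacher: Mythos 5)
The paper does not actually prove Theorem~\ref{cocqsde}: it is imported wholesale, the only justification offered being that it ``combines Theorem~6.6 of~\cite{aunt} with Theorem~7.5 of~\cite{mother}''. Your outline is therefore not an alternative to an argument in the paper but a sketch of the cited one, and you are candid about this: the step you isolate as the real obstacle --- that norm continuity of the vacuum expectation semigroup forces norm continuity of all the associated semigroups $\Qcd$, after which the bounded coefficient $F$ is reassembled from their generators --- is precisely the content of Theorem~7.5 of~\cite{mother}, and its synthesis with Picard existence/uniqueness for~\eqref{qsde} is Theorem~6.6 of~\cite{aunt}. At the level of resolution at which the paper operates, your outline follows the same route as its sources.

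Two steps are misstated as written, though both have standard repairs. First, negativity of the integrand gives $\norm{\XF_t(u\ot\e{f})}\les\norm{u\ot\e{f}}$ for each single exponential vector, but contractivity on a total \emph{set} does not extend to the whole space; you must run the second fundamental formula on finite sums $\xi=\sum_i u_i\ot\e{f_i}$, where the integrand is still the single quadratic form $\bip{\wh{\XF_s}\,\xi(s)}{\bigl((F+F^*+F^*\Delta F)\ot I_\Fock\bigr)\wh{\XF_s}\,\xi(s)}\les 0$, so the same monotonicity argument gives contractivity on the dense subspace $\init\aot\Exp$. Second, in the converse direction, knowing $\bip{u\ot\wh{c}}{(F+F^*+F^*\Delta F)(u\ot\wh{c})}\les 0$ for all product vectors does not yield the operator inequality by rescaling $c$: negativity of a quadratic form on a non-linear set says nothing about its linear span. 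Again differentiate at $t=0$ using sums: the vectors $\xi(0)=\sum_i u_i\ot\wh{f_i(0)}\ot\e{f_i}$ form a linear subspace of $\init\ot\khat\ot\Fock$ which is dense (since $\e{c\ind_{[0,\epsilon[}+f\ind_{[\epsilon,\infty[}}\To\e{f}$ as $\epsilon\downarrow 0$ --- the same totality device the paper uses in proving Theorem~\ref{compareprojcocs}(a)), and a bounded quadratic form that is $\les 0$ on a dense subspace is $\les 0$ everywhere, whence $F+F^*+F^*\Delta F\les 0$. A smaller point: strong continuity of $t\mapsto(\XF_t)^*$ also deserves a word (e.g.\ via the adjoint process solving the associated right equation with coefficient $F^*$), since it does not follow formally from strong continuity of $\XF$ plus contractivity.
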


The theorem above says that every such cocycle satisfies the
equation~\eqref{qsde}, and that all (contractive) solutions of this
equation are indeed left cocycles. In the case when $\init$ is finite
dimensional or the cocycle is local, the fact that we assumed that $t
\mapsto X_t$ is strongly continuous is enough to guarantee that the
cocycle is Markov-regular, but this condition is a nontrivial
requirement otherwise. Our basic reference for quantum stochastic
calculus is~\cite{qsc lects}, where the same notations are used.

Locality of $\si$-cocycles is also easily characterised.

\begin{lemma}\label{locality}
Let $F \in C_0(\init, \noise)$ and let $X = X^F$ be the associated
left $\si$-cocycle. Then $X$ is local if and only if $F \in I_\init
\ot B(\khat)$.
\end{lemma}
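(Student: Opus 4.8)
The plan is to prove both directions by exploiting the correspondence of Theorem~\ref{cocqsde} together with the uniqueness of solutions to the quantum stochastic differential equation~\eqref{qsde}. First I would dispose of the easy direction: if $F \in I_\init \ot B(\khat)$, then $F \ot I_\Fock$ commutes with $I_\init \ot B(\Fock_{[0,t[}) \ot I_{\Fock_{[t,\infty[}}$ in the relevant sense, and more to the point the equation~\eqref{qsde} restricted to the noise algebra over $[0,t[$ is ``decoupled'' from the initial space $\init$. Concretely, one checks that the ampliation $\wh{X_s}(F \ot I_\Fock)$ preserves the algebra $I_\init \ot B(\Fock_{[0,t[}) \ot I_{\Fock_{[t,\infty[}}$, so that by uniqueness of the solution the cocycle $X = X^F$ takes values in that algebra, i.e.\ is local. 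This is the routine direction and I would present it compactly.

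For the converse, suppose $X = X^F$ is local. The strategy is to recover $F$ from $X$ by differentiation at $t = 0$ and show the result lies in $I_\init \ot B(\khat)$. From the quantum Ito formula / the defining equation~\eqref{qsde}, the stochastic generator $F$ is determined by the vacuum-referenced matrix elements
\[
\bip{u \ot \e{g}}{(F \ot I_\Fock)\, \zeta \ot \e{g}} = \lim_{t \to 0} \tfrac{1}{t}\bip{u \ot \e{g}}{(X_t - I)\, \zeta \ot \e{g}}
\]
for suitable $u, \zeta \in \init \ot \khat$ and step functions $g$; more precisely one uses the known formula expressing $\langle u\e{f}, X_t v\e{g}\rangle$ in terms of $F$. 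Since each $X_t$ lies in $I_\init \ot B(\Fock_{[0,t[}) \ot I_{\Fock_{[t,\infty[}}$, its matrix elements between vectors $a \ot \e{f}$ and $b \ot \e{f}$ with $a, b \in \init$ do not ``see'' the $\init$-dependence beyond the inner product $\ip{a}{b}$; that is, $\bip{a \ot \e{f}}{X_t\, b \ot \e{f}} = \ip{a}{b}\, c_t(f)$ for a scalar $c_t(f)$ depending only on $f$ and $t$. Passing to the limit, the corresponding matrix elements of $F$ factor through $\ip{a}{b}$, which forces $F$ to lie in $I_\init \ot B(\khat)$.

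The main obstacle I anticipate is making the ``differentiation at $t=0$'' rigorous and clean: $F$ is not literally a strong derivative of $X_t$, so one must quote the precise relation between a Markov-regular adapted cocycle and its stochastic generator (from~\cite{aunt, mother} as packaged in Theorem~\ref{cocqsde}), namely that $F$ is uniquely pinned down by the first-order behaviour of the matrix elements $\bip{u \e{f}}{X_t v \e{g}}$ against exponential vectors, and then argue that locality of each $X_t$ is inherited by these first-order coefficients. An alternative, possibly slicker, route avoiding explicit limits: let $\Phi$ be a normal state on $B(\init)$ and form the ``sliced'' cocycle; locality of $X$ means the slice is independent of the choice of $\Phi$, and since slicing commutes with solving~\eqref{qsde}, the generator $F$ must itself be slice-independent in the $\init$ variable, i.e.\ $F \in I_\init \ot B(\khat)$. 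I would develop whichever of these requires the least additional machinery, but in either case the crux is transferring a pointwise-in-$t$ algebraic property of the cocycle to its infinitesimal generator, and the uniqueness clause of Theorem~\ref{cocqsde} is the tool that makes this transfer legitimate.
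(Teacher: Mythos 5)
Your plan is correct, but it proceeds quite differently from the paper, whose entire proof consists of noting that $\si_t \bigl( B(\init \ot \Fock) \bigr)' = I_\init \ot B(\Fock_{[0,t[}) \ot I_{[t,\infty[}$ — so that locality is precisely the statement that the cocycle takes values in $\vM \vot B(\Fock)$ for the commutative von Neumann algebra $\vM = \Comp I_\init$ — and then citing Corollary~6.5 of~\cite{aunt}, which characterises such cocycles as exactly those whose stochastic generator lies in $\vM \vot B(\khat)$. You instead re-derive the needed special case from scratch. Both of your directions can be made to work, and the crux you identify (transferring the algebraic property from $X$ to $F$ via the uniqueness clause of Theorem~\ref{cocqsde}) is the right one, but two points deserve tightening. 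For the easy direction, ``$\wh{X_s}(F \ot I_\Fock)$ preserves the algebra'' is circular as stated; the clean implementation is to write $F = I_\init \ot G$ with $G \in C_0(\Comp,\noise)$, solve the QSDE over the trivial initial space to get a cocycle $Y^G$ on $\Fock$, check that $I_\init \ot Y^G_t$ solves~\eqref{qsde} with coefficient $F$, and invoke uniqueness to get $X^F_t = I_\init \ot Y^G_t$ (adaptedness then localises to $\Fock_{[0,t[}$); alternatively run the Picard iteration and use weak closedness of $I_\init \vot B(\Fock)$. For the converse, the rigorous version of your differentiation is the first fundamental formula: for step functions the integrand is right-continuous at $0$ and $\wh{X_0} = I$, so $\lim_{t \to 0} t^{-1} \ip{u \e{f}}{(X_t - I) v \e{g}} = \ip{u \ot \wh{f(0)}}{F (v \ot \wh{g(0)})} \ip{\e{f}}{\e{g}}$; if $X_t \in I_\init \vot B(\Fock)$ the left-hand side factors through $\ip{u}{v}$, and since the vectors $\wh{c}$, $c \in \noise$, are total in $\khat$, this forces $F \in I_\init \ot B(\khat)$. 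Note here that you should allow $f \neq g$ (or add a complex-scaling/polarisation step), since restricting to matrix elements between $a \ot \e{f}$ and $b \ot \e{f}$ only gives the slices at equal test vectors $\wh{f(0)}$, and $\{\wh{c}: c \in \noise\}$ is not a linear subspace. In summary: the paper's route buys a two-line proof at the cost of quoting the general affiliation theorem of~\cite{aunt}, while yours buys a self-contained argument using only Theorem~\ref{cocqsde}, the fundamental formulae and uniqueness of solutions.
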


\begin{proof}
Since $\si_t \bigl( B(\init \ot \Fock) \bigr)' = I_\init \ot
B(\Fock_{[0,t[}) \ot I_{[t,\infty[}$, the result follows from
Corollary~6.5 of~\cite{aunt}.
\end{proof}

Hudson and Parthasarathy focused on determining those $F \in B(\init
\ot \khat)$ that give rise to unitary solutions of~\eqref{qsde}.
(Co)isometry and contractivity were considered later, with positive
contraction-valued and projection-valued adapted $\si$-cocycles
studied in~\cite{jtl} (although for the former see~\cite{ccr} for the
case when $\init = \Comp$). In Section~5.2 of~\cite{beast} the
morphisms of time ordered Fock modules are characterised;
Theorem~4.4.8 of the same paper gives a one-to-one correspondence
between endomorphisms of product systems of Hilbert modules and local
cocycles of an $E_0$-semigroup on a related $C^*$-algebra. These
results contain, as a special case, an alternative characterisation of
the local cocycles of the CCR flow.

The following is Proposition~3.2 of~\cite{jtl}. Throughout, when
writing elements of $B(\init \ot \khat)$ in a $2 \times 2$ block form,
we use the identification
\[
\init \ot \khat \cong \init \op (\init \ot \noise),
\]
so that
\[
F = \begin{bmatrix} A & B \\ C & D \end{bmatrix}
\]
for $A \in B(\init)$, $B \in B(\init \ot \noise; \init)$, $C \in
B(\init; \init \ot \noise)$ and $D \in B(\init \ot \noise)$.

\begin{proposition}\label{projcoc}
Let $F \in C_0(\init, \noise)$. The following are equivalent\tu{:}
\begin{rlist}
\item
$X = X^F$ is projection-valued\tu{;}
\item
$F \in \vN \, \vot B(\khat)$ for some commutative von Neumann algebra
$\vN$, and $F + F^* \Delta F = 0$\tu{;}
\item
$F = \begin{sbmatrix} -L^*L & L^* \\ L & P-I \end{sbmatrix} \in \vN \,
\vot B(\khat)$ for some commutative von Neumann algebra $\vN$, where
$P \in \vN \, \vot B(\noise)$ is a projection and $PL = 0$.
\end{rlist}
\end{proposition}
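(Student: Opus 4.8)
The plan is to prove the implications (iii)~\Implies~(i)~\Implies~(ii)~\Implies~(iii); the implication (ii)~\Implies~(iii) is a short block computation, while the other two carry the quantum stochastic content. Throughout I write $F = \begin{sbmatrix} A & B \\ C & D \end{sbmatrix}$ as in the statement, note that under $\init\ot\khat \cong \init\op(\init\ot\noise)$ the operator $\Delta$ is $\begin{sbmatrix} 0 & 0 \\ 0 & I \end{sbmatrix}$, and for $\al,\be\in\noise$ write $F^{\al,\be} := (I_\init\ot\bra{\wh\al})F(I_\init\ot\ket{\wh\be}) \in B(\init)$ for the compression of $F$ in the $\khat$-slot, where $\wh\al = \begin{sbmatrix}1\\\al\end{sbmatrix}$. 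The ingredients from quantum stochastic calculus (\cite{qsc lects}) are: the first fundamental formula, $\ip{u\e{f}}{X^F_t v\e{g}} = \ip{u\e{f}}{v\e{g}} + \int^t_0 \ip{u\e{f}}{X^F_s F^{f(s),g(s)} v\e{g}}\,ds$ for step functions $f,g$; the fact that $(X^F)^*$ solves the \emph{right} Hudson--Parthasarathy equation with coefficient $F^*$, obtained by taking adjoints in~\eqref{qsde}; the first fundamental lemma, that a quantum stochastic integral against $d\Lambda$ vanishing identically in $t$ has vanishing integrand; and the It\^o relation $(F\ot I_\Fock)\,d\Lambda_s\,(F\ot I_\Fock)\,d\Lambda_s = (F\Delta F\ot I_\Fock)\,d\Lambda_s$. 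Finally, since $F\in\vN\,\vot B(\khat)$ exactly when every $F^{\al,\be}$ lies in $\vN$, Corollary~6.5 of~\cite{aunt} gives $X^F_t\in\vN\,\vot B(\Fock)$ for all $t$ in that case.

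For (ii)~\Iff~(iii): expanding $F + F^*\Delta F = 0$ in block form, $\Delta F = \begin{sbmatrix}0&0\\C&D\end{sbmatrix}$ and hence $F^*\Delta F = \begin{sbmatrix}C^*C & C^*D\\D^*C & D^*D\end{sbmatrix}$, so the equation is equivalent to $A = -C^*C$, $B = -C^*D$, $(I+D^*)C = 0$ and $D^*D = -D$. The last of these forces $D = D^*$ (as $D^*D$ is self-adjoint) and $D(D+I) = 0$, so $D = P - I$ with $P := I+D$ a projection; with $L := C$ we then get $PL = (I+D^*)C = 0$, $B = -C^*D = L^*(I-P) = L^*$ and $A = -L^*L$, which is the matrix in~(iii), and $P, L$ inherit membership of $\vN\,\vot B(\khat)$ from $F$. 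Conversely that matrix is self-adjoint and $F^*\Delta F = F\Delta F = -F$ by a one-line computation using $L^*P = 0$ and $P^2 = P$; commutativity of $\vN$ is irrelevant here.

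For (iii)~\Implies~(i): by the fact just quoted, $X := X^F$ has $X_t\in\vN\,\vot B(\Fock)$ for all $t$, and since $\vN$ is commutative $\vN\ot I_\Fock$ is central in $\vN\,\vot B(\Fock)$; as each $F^{\al,\be}$ lies in $\vN$ this yields $[\wh{X_t}, F\ot I_\Fock] = 0$ for every $t$. Hence~\eqref{qsde} may be rewritten as $X_t = I + \int^t_0 (F\ot I_\Fock)\wh{X_s}\,d\Lambda_s$, the right Hudson--Parthasarathy equation with coefficient $F = F^*$, of which $(X^F)^*$ is also a solution; by uniqueness $X_t = X^*_t$. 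Finally, applying the quantum It\^o formula to $X^2_t$ and moving $F\ot I_\Fock$ through $\wh{X_t}$ wherever it occurs gives $d(X^2_t) = \wh{X^2_t}\bigl((2F + F\Delta F)\ot I_\Fock\bigr)d\Lambda_t$; since $F\Delta F = F^*\Delta F = -F$ by~(ii) this equals $\wh{X^2_t}(F\ot I_\Fock)d\Lambda_t$, so $X^2_t$ solves~\eqref{qsde} with the same initial value as $X_t$ and Theorem~\ref{cocqsde} gives $X^2_t = X_t$. With $X_t = X^*_t$ this says $X^F$ is projection-valued.

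For (i)~\Implies~(ii): assume $X := X^F$ is projection-valued. From $X_t = X^*_t$, $X$ satisfies both~\eqref{qsde} and the right equation with coefficient $F^*$ (which $X^* = X$ solves); subtracting and applying the first fundamental lemma gives $\wh{X_s}(F\ot I_\Fock) = (F^*\ot I_\Fock)\wh{X_s}$ for all $s$, so at $s = 0$ we get $F = F^*$ and thereafter $[\wh{X_s}, F\ot I_\Fock] = 0$, i.e.\ $X_s\in\vM'\,\vot B(\Fock)$ where $\vM$ is the von Neumann algebra generated by $\{F^{\al,\be}:\al,\be\in\noise\}$. The quantum It\^o computation for $X^2_t$ from the previous paragraph, combined with $X^2_t = X_t$ and the first fundamental lemma, then forces $F + F\Delta F = 0$, that is $F + F^*\Delta F = 0$. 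It remains to exhibit a \emph{commutative} von Neumann algebra containing every $F^{\al,\be}$, and $\vM$ itself serves: from the first fundamental formula, $\ip{\e{f}}{\e{g}}\,F^{f(0),g(0)} = \lim_{t\downarrow 0} t^{-1}\bra{\e{f}}(X_t - I)\ket{\e{g}}$ is a weak limit in $B(\init)$ of elements of $\vM'$ (since $X_t\in\vM'\,\vot B(\Fock)$), so every $F^{\al,\be}$ lies in $\vM'$; hence $\vM = \vM'' \subseteq \vM'$ is commutative and $\vN := \vM$ finishes~(ii). I expect the assertion that $\vN$ can be taken commutative to be the only genuinely subtle step; the remainder is the lengthy-but-routine bookkeeping of ampliations, adjoints and It\^o corrections in the two stochastic computations.
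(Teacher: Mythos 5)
Your proof is correct, but there is nothing in this paper to compare it against: the proposition is imported verbatim from Proposition~3.2 of~\cite{jtl}, and no proof is given here. What you have produced is a self-contained reconstruction using precisely the toolkit the paper deploys elsewhere. The block computation for (ii)$\Iff$(iii) is routine and right. Your (iii)$\Implies$(i) is the same device as in the proof of Theorem~\ref{compareprojcocs}(a): $F \in \vN \, \vot B(\khat)$ forces $X^F_t \in \vN \, \vot B(\Fock)$ (the fact the paper extracts from Corollary~6.5 of~\cite{aunt} in Lemma~\ref{locality}; it also follows by conjugating~\eqref{qsde} with unitaries $u' \ot I_\Fock$, $u' \in \vN'$, and using uniqueness), whence $F \ot I_\Fock$ commutes with $\wh{X^F_s}$, and $X = X^*$ and $X^2 = X$ drop out of uniqueness for the right and left forms of~\eqref{qsde}. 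The genuinely nontrivial step is producing a \emph{commutative} $\vN$ in (i)$\Implies$(ii), and your route works: self-adjointness of $X$ gives $\wh{X_s}(F \ot I_\Fock) = (F^* \ot I_\Fock)\wh{X_s}$, hence $F = F^*$ at $s=0$ and $X_s \in \vM' \, \vot B(\Fock)$ for $\vM = \{F^{\al,\be}\}''$; then each $F^{\al,\be}$ is recovered as a weak limit of slices of $t^{-1}(X_t - I)$, and since slice maps carry $\vM' \, \vot B(\Fock)$ into the weakly closed $\vM'$ and $\ip{\e{f}}{\e{g}} \neq 0$, you get $\vM \subseteq \vM'$, so $\vM$ is commutative and contains all compressions of $F$, i.e.\ $F \in \vM \, \vot B(\khat)$. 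Two points deserve explicit justification in a final write-up, though both are standard and consistent with the paper's own usage: uniqueness of bounded solutions for the \emph{right} Hudson--Parthasarathy equation (invoked twice), which can be reduced to the left equation by taking adjoints; and the passage from a vanishing integral to the pointwise identity at every $s$ (in particular $s = 0$), which uses right-continuity of the integrand for step functions together with the same totality argument the paper runs after~\eqref{projtest2}.
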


\begin{remark}
(a) The fact that $X$ is, in particular, self-adjoint means that it
must be a left and right cocycle, but not necessarily a local cocycle.
Indeed, using Lemma~\ref{locality}, an example is constructed by
taking $\init$ with $\dim \init > 1$, $\noise = \Comp$, $L = 0$, and
$P \in B(\init)$ any nontrivial projection.

(b) When $\init = \Comp$, we have $L \in B(\init; \init \ot \noise) =
B(\Comp; \noise) = \ket{\noise}$, the column operator space associated
to $\noise$, where $\ket{u} \in \ket{\noise}$ denotes the map $\lambda
\mapsto \lambda u$, and $\bra{u} := \ket{u}^*$. In this case
projection-valued, adapted $\si$-cocycles, which are now all local,
are indexed by pairs $(P,u)$, where $P \in B(\init)$ is a projection,
and $u \in \Ker P$ is a vector.
\end{remark}

The new result in this section is the following.

\begin{theorem}\label{compareprojcocs}
Let $X^F$ and $X^G$ be a pair of Markov-regular,
projection-valued, adapted $\si$-cocycles with generators
\begin{equation}\label{twogens}
F = \begin{bmatrix} -L^*L & L^* \\ L & P-I \end{bmatrix} \ \text{ and
} \ G = \begin{bmatrix} -M^*M & M^* \\ M & Q-I \end{bmatrix}.
\end{equation}

\noindent
\tu{(a)} The following are equivalent\tu{:}
\begin{rlist}
\item
$X^F_t \les X^G_t$ for all $t \ges 0$, equivalently $X^F_t X^G_t =
X^F_t$ for all $t \ges 0$.
\item
$G + G\Delta F = 0$.
\item
$P \les Q$ and $M = Q^\perp L$.
\end{rlist}

\noindent
\tu{(b)} Suppose that $F, G \in \vN \, \vot B(\khat)$ for some
commutative von Neumann algebra $\vN$, then the following are
equivalent\tu{:}
\begin{rlist}
\item
There is some $H \in \bigl( \vN \, \vot B(\khat) \bigr) \cap
C_0(\init, \noise)$ such that
\begin{equation}\label{equivgens}
X^F_t = (X^H_t)^* X^H_t \ \text{ and } \ X^G_t = X^H_t (X^H_t)^* \
\text{ for all } t \ges 0.
\end{equation}
\item
The projections $P$ and $Q$ are equivalent in $\vN \, \vot B(\noise)$,
that is, there is some $D \in \vN \, \vot B(\noise)$ such that $P =
D^*D$ and $Q = DD^*$.
\end{rlist}
In this case any $H$ for which~\eqref{equivgens} holds has the form $H
= \begin{sbmatrix} A & B \\ C & D-I \end{sbmatrix}$ with $D$ as
above and where
\begin{equation*}
C = M + E, \quad
B = L^* -E^*D, \ \text{ and } \ 
A = -\tfrac{1}{2} (E^*E +BB^* +C^*C) +iK
\end{equation*}
for some $E \in \vN \, \vot \ket{\noise}$ satisfying $Q^\perp E = 0$
and some $K = K^* \in \vN$.
\end{theorem}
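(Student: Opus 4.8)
The plan is to push both statements down to the level of stochastic generators via the quantum It\^o product formula, and then to finish with $2\times2$ block-matrix algebra. The auxiliary fact I would record first is that a Markov-regular cocycle whose generator lies in $\vN\vot B(\khat)$, for a von Neumann algebra $\vN\subseteq B(\init)$, has $X_t\in\vN\vot B(\Fock)$ for every $t$: for $u\in\vN'$ the ampliation $u\ot I_\Fock$ commutes with $F\ot I_\Fock$ and with every $d\Lambda_s$, so by uniqueness of solutions of~\eqref{qsde} it commutes with each $X_t$, whence $X_t\in(\vN'\ot I_\Fock)'=\vN\vot B(\Fock)$. When $\vN$ is \emph{commutative} this lets $\wh{X_s}$ be moved freely past the constant coefficient $F\ot I_\Fock$ and past $\Delta\ot I_\Fock$ inside a stochastic integral, so that well-chosen products of such cocycles again solve~\eqref{qsde} with a \emph{constant} coefficient, at which point uniqueness of solutions does the rest.

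\textbf{Part~(a).} Since $X^F_t,X^G_t$ are projections, $X^F_t\les X^G_t\iff X^F_tX^G_t=X^F_t\iff X^G_tX^F_t=X^F_t$; and by Proposition~\ref{projcoc} each generator lies in (a commutative von Neumann algebra)$\,\vot B(\khat)$, so $X^F,X^G$ lie in the corresponding $\vot B(\Fock)$. Feeding this into the quantum It\^o formula for $X^G_tX^F_t$, and using only that $\wh{X^F_s}$ commutes with $\Delta F\ot I_\Fock$ (without assuming the two algebras jointly commute), yields
\[
X^G_tX^F_t=I+\int_0^t\Bigl[\wh{X^G_sX^F_s}(F\ot I_\Fock)+\wh{X^G_s}(G\ot I_\Fock)\wh{X^F_s}\bigl((I+\Delta F)\ot I_\Fock\bigr)\Bigr]d\Lambda_s.
\]
If $G+G\Delta F=0$, commuting $\wh{X^F_s}$ through $(I+\Delta F)\ot I_\Fock$ kills the second integrand, so $R_t:=X^G_tX^F_t-X^F_t$ solves $R_t=\int_0^t\wh{R_s}(F\ot I_\Fock)\,d\Lambda_s$ and hence vanishes, giving (ii)$\Rightarrow$(i). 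Conversely, if $X^G_tX^F_t=X^F_t$ for all $t$, subtracting the two representations of this process shows the second integrand integrates to $0$, hence vanishes identically, and at $s=0$ this reads $G(I+\Delta F)=0$, giving (i)$\Rightarrow$(ii). Finally, since $I+\Delta F=\begin{sbmatrix}I&0\\L&P\end{sbmatrix}$, the product $G+G\Delta F=G(I+\Delta F)$ has $(2,1)$- and $(2,2)$-blocks $M+(Q-I)L$ and $(Q-I)P$, and when these vanish so do the $(1,1)$- and $(1,2)$-blocks, so $G+G\Delta F=0$ holds precisely when $M=Q^\perp L$ and $QP=P$, i.e.\ precisely when (iii) holds.

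\textbf{Part~(b).} Now $F,G,H\in\vN\vot B(\khat)$ with $\vN$ commutative, so $X^H\in\vN\vot B(\Fock)$, and the same It\^o computation shows $(X^H_t)^*X^H_t$ solves~\eqref{qsde} with the constant coefficient $H+H^*+H^*\Delta H$ and $X^H_t(X^H_t)^*$ with coefficient $H+H^*+H\Delta H^*$. Hence~\eqref{equivgens} is equivalent to the pair $H+H^*+H^*\Delta H=F$, $H+H^*+H\Delta H^*=G$. Writing the lower-right block of $H$ as $D-I$, the $(2,2)$-components read $D^*D-I=P-I$ and $DD^*-I=Q-I$, and since $D\in\vN\vot B(\noise)$ this is exactly (ii); this proves (i)$\Rightarrow$(ii). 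For (ii)$\Rightarrow$(i), given such a $D$ I would solve the remaining blocks: from $PL=0$ and $QM=0$ one gets $DL=0$ and $D^*M=0$, and the two $(2,1)$-components, $B^*+D^*C=L$ and $C+DB^*=M$, then force $Q^\perp E=0$ and $B=L^*-E^*D$ for $E:=C-M$, while the $(1,1)$-components determine $A$ up to an additive term $iK$, $K=K^*\in\vN$, in the displayed form. A direct check (all cross-terms collapsing by $DL=0$, $D^*M=0$, $QE=E$) confirms that this $H$ satisfies both identities, and that $H\in C_0(\init,\noise)$ automatically, since $H+H^*+H^*\Delta H=F$ and $F\les 0$ (for a projection generator $F=F^*$ with $F+F^*\Delta F=0$, so $F+F^*+F^*\Delta F=F$, which is $\les 0$ because $F\in C_0$). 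Running the same block analysis backwards gives the stated form of every $H$ satisfying~\eqref{equivgens}.

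\textbf{Main obstacle.} The crux throughout is the quantum-stochastic bookkeeping in the two It\^o computations: one must verify that the ampliated cocycles genuinely commute past the constant coefficients and past $\Delta\ot I_\Fock$ inside the stochastic integrals, so that $X^G_tX^F_t$, $(X^H_t)^*X^H_t$ and $X^H_t(X^H_t)^*$ solve~\eqref{qsde} with \emph{constant} coefficients --- without this the uniqueness argument, which drives everything, is not available. In part~(a) the delicate point is that $F$ and $G$ need not lie in a common commutative algebra, so one must use only that each cocycle commutes with its own $\Delta$-twisted generator and never move $\wh{X^F_s}$ past $G\ot I_\Fock$; in part~(b) the common $\vN$ removes this difficulty, leaving the routine but lengthy block-by-block verification that the pieces of $H$ fit together.
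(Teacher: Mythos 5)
Your proposal is correct and follows essentially the same route as the paper: the fundamental formulae/It\^o product computation, the commutation of $\wh{X^F_s}$ (resp.\ $\wh{X^H_s}$) with constant coefficients lying in $\vM \vot B(\khat)$ for a commutative $\vM$, uniqueness of QSDE solutions and of stochastic generators, and then the identical block-matrix analysis driven by $DL=0$, $D^*M=0$ and $QE=E$. The differences are cosmetic only --- you work with $X^G X^F$ and the left equation where the paper uses $X^F X^G$ and the right equation, and you spell out two points the paper leaves implicit (that $X^F_t \in \vM \vot B(\Fock)$ when the generator lies in $\vM \vot B(\khat)$, and that the constructed $H$ lies in $C_0(\init,\noise)$).
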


\begin{remark}
The condition $E \in \vN \, \vot \ket{\noise}$ effectively means that
it can be written in block form as a column with entries taken from
$\vN$.
\end{remark}

\begin{proof}
(a) Let $\xi = \sum_i u_i \ot \e{f_i}, \zeta = \sum_j v_j \ot \e{g_j}
\in \init \aot \Exp$. Then the first and second fundamental formulae
of quantum stochastic calculus (\cite{qsc lects}, Theorems~3.13
and~3.15) give that
\[
\ip{X^F_t \xi}{X^G_t \zeta} - \ip{\xi}{X^F_t \zeta}
\]
is equal to
\begin{gather}
\begin{aligned} \int^t_0
\Bigl\{ &\ip{\wh{X^F_s} \xi(s)}{(G \ot I_\Fock) \wh{X^G_s} \zeta(s)} +
\ip{(F \ot I_\Fock) \wh{X^F_s} \xi(s)}{\wh{X^G_s} \zeta(s)} \\
+ &\ip{(F \ot I_\Fock) \wh{X^F_s} \xi(s)}{(\Delta G \ot I_\Fock)
\wh{X^G_s} \zeta(s)} -\ip{\xi(s)}{(F \ot I_\Fock) \wh{X^F_s} \zeta(s)}
\Bigr\}\, ds
\end{aligned} \label{projtest2} \\
= \int^t_0 \Bigl\{\ip{\xi(s)}{\wh{X^F_s} [(F +G +F \Delta G) \ot
I_\Fock] \wh{X^G_s} \zeta(s)} - \ip{\xi(s)}{(F \ot I_\Fock) \wh{X^F_s}
\zeta(s)} \Bigr\}\, ds \notag
\end{gather}
where $\xi(s) = \sum_i u_i \ot \wh{f_i (s)} \ot \e{f_i}$ and similarly
for $\zeta(s)$, with $\wh{d} := \begin{spmatrix} 1 \\ d \end{spmatrix}$
for any $d \in \noise$.
Thus if~(i) holds then the integral above is zero, and since the
integrand is continuous in a neighbourhood of $0$ we get
\[
\ip{\xi(0)}{[(F +G +F \Delta G) \ot
I_\Fock] \zeta(0)} - \ip{\xi(0)}{(F \ot I_\Fock) \zeta(0)} = 0
\]
from which~(ii) follows since $\xi(0)$ and $\zeta(0)$ range over a
total subset of $\init \ot \khat \ot \Fock$.

Let $\vN$ be a commutative von Neumann algebra as in
Proposition~\ref{projcoc}. To see that (ii) implies (i), note that
in~\eqref{projtest2} the term in square brackets equals $F \ot I_\Fock
\in (\vN \, \vot B(\khat)) \ot I_\Fock$, which commutes with
$\wh{X^F_s} \in (\vN \ot I_\khat) \vot B(\Fock)$. Consequently $X^F
X^G$ and $X^F$ are both solutions of the right version of the
Hudson-Parthasarathy QSDE~\eqref{qsde} for the same coefficient $F$,
and so uniqueness of solutions gives~(i).

Finally, (ii) is equivalent to the following four equations being
satisfied:
\[
M^*L = M^*M, \quad M^*P = 0, \quad M = Q^\perp L \ \text{ and } \ PQ =
P.
\]
That all four together are equivalent to just the last two is a
consequence of the fact that $QM = 0$, equivalently $Q^\perp M = M$,
and also $PL = 0$.

\bigskip
(b) This time, for any $H \in \bigl( \vN \, \vot B(\khat) \bigr) \cap
C_0(\init, \noise)$, it follows that $(X^H)^* X^H$ and $X^H (X^H)^*$
are both left and right cocycles, with stochastic generators $H +H^*
+H^* \Delta H$ and $H + H^* +H \Delta H^*$ respectively. (The argument
follows similar lines to part~(a); see also Lemma~3.1 of~\cite{jtl}.)
Uniqueness of generators implies that an $H$ exists such that~(i)
holds if and only if
\[
H +H^* +H^* \Delta H = F \ \text{ and } \ H + H^* +H \Delta H^* = G.
\]
Writing $H$ in block form as in the statement, this becomes
\begin{align}
\begin{bmatrix} A+A^*+C^*C & B+C^*D \\ B^*+D^*C & D^*D-I \end{bmatrix}
&= \begin{bmatrix} -L^*L & L^* \\ L & P-I \end{bmatrix}
\label{equivtest1} \\
\intertext{ and }
\begin{bmatrix} A+A^*+BB^* & C^*+BD^* \\ C+DB^* & DD^*-I \end{bmatrix}
&= \begin{bmatrix} -M^*M & M^* \\ M & Q-I \end{bmatrix}.
\label{equivtest2}
\end{align}
This shows immediately that (i) implies (ii). 

To see that (ii) implies (i), we have to satisfy a total of eight
equations when comparing components in~\eqref{equivtest1}
and~\eqref{equivtest2}. However, (ii) is merely the statement that it
is possible to find $D \in \vN \, \vot B(\noise)$ such that the
bottom-right components are equal in both equations. Comparing
bottom-left components we need
\begin{equation}\label{equivtest3}
B^*+D^*C = L \ \text{ and } \ C+DB^* = M.
\end{equation}
But note that $DD^* = Q$ and $D^*D = P$, so $D$ is a partial isometry
with initial projection $P$ and final projection $Q$. Since $PL = 0$,
it follows that $DL = 0$ as well; similarly $D^*M = 0$. Eliminating $B$
from~\eqref{equivtest3} yields a necessary condition on $C$:
\[
(I-DD^*)C = M -DL \ \text{ so } \ Q^\perp C = M.
\]
Since $Q^\perp M = M$, all solutions to this equation have the form $C
= M +E$ with $E \in \vN \, \vot \ket{\noise}$ satisfying $Q^\perp E =
0$. But now putting this back into the first equation
in~\eqref{equivtest3} we get
\[
B^* = L - D^*M -D^*E  \ \text{ so } \ B = L^* -E^*D.
\]
Now the bottom-left components of~\eqref{equivtest1} are equal by
construction, hence so are the top-right components. For the
off-diagonal components in~\eqref{equivtest2} we have
\[
C +DB^* = M +E +DL -DD^*E = M +DL +Q^\perp E = M
\]
as required.

Finally, to ensure that the top-left components in~\eqref{equivtest1}
and~\eqref{equivtest2} are equal, it is clear that twice the real part
of $A$ must equal both $-L^*L -C^*C$ and $-M^*M -BB^*$, with the
imaginary part unconstrained. That these two conditions can be
satisfied simultaneously follows since
\begin{align*}
-L^*L -C^*C &= -L^*L -(M+E)^*(M+E) \\
&= -L^*L -M^*M -E^*E -M^*E -E^*M \\
&= -L^*L -M^*M -E^*E,
\end{align*}
since $M^*E = (Q^\perp M)^* E = M^*Q^\perp E = 0$, and similarly
\begin{align*}
-M^*M -BB^* &= -M^*M -(L^*-E^*D)(L^*-E^*D)^* \\
&= -M^*M -L^*L - E^*QE +E^*DL  +(DL)^*E \\
&= -L^*L -M^*M -E^*E.
\end{align*}
Thus, if (ii) holds, that is if $P$ and $Q$ are equivalent in $\vN \,
\vot B(\noise)$, then it is possible to fill out the matrix for $H$ in
a way such that~\eqref{equivtest1} and~\eqref{equivtest2} both hold,
hence~(i) holds.  Moreover, the computations above reveal all possible
solutions to the problem.
\end{proof}

\begin{remark}
(a) Part~(a) above is in some sense more satisfactory that part~(b)
since we must have $F \in \vM \vot B(\khat)$ and $G \in \vN \, \vot
B(\khat)$ for a pair of commutative von Neumann algebras $\vM$ and
$\vN$, but no relation between these algebras is imposed. Indeed, take
$\noise = \Comp^2$, and $\init$ of dimension at least $2$ so that we
can pick noncommuting projections $p, q \in B(\init)$. Then let
\[
F = \begin{bmatrix} 0 & 0 \\ 0 & P-I \end{bmatrix}, \: \text{ and } \:
G = \begin{bmatrix} 0 & 0 \\ 0 & Q-I \end{bmatrix} \: \text{ where }
\: P = \begin{bmatrix} p & 0 \\ 0 & 0 \end{bmatrix} \: \text{ and } \:
Q = \begin{bmatrix} I_\init & 0 \\ 0 & q \end{bmatrix}.
\]
Since $G +G \Delta F = 0$, we have $X^F \les X^G$. However, any von
Neumann algebra $\vN_1$ that satisfies $F,G \in \vN_1 \vot B(\khat)$
must contain both $p$ and $q$. The assumption in~(b) about the
existence of the common commutative von Neumann algebra is to
facilitate pushing the coefficient $H$ past the cocycle $X^H$ in the
proof; when $\init = \Comp$, however, this is not actually a
restriction.

(b) In part (b), since $X^F$ and $X^G$ are projection-valued
cocycles, it follows that $X^H$ must be partial isometry-valued. A
necessary and sufficient condition on $H \in \vN \, \vot B(\khat)$ for
it to be the generator of a partial isometry-valued cocycle
(\cite{jtl}, Proposition~3.3) is that
\[
H +H^* +H^* \Delta H + H \Delta H + H \Delta H^* + H \Delta H^* \Delta
H = 0.
\]
The keen reader is invited to check directly that an $H$ satisfying
the structure relations in part~(b) does indeed satisfy this equation.
\end{remark}

\section{Relations on the subordinates of an $E$-semigroup}

For a given $E$-semigroup $\al$ on some $B(\Hil)$, let $\Sub(\al)$
denote the set of $E$-semigroups that are subordinate to $\al$. We now
turn our attention to possible natural relations that can be defined
on the set of all $E$-semigroups on $B(\Hil)$, or perhaps just on
$\Sub(\al)$ for some such semigroup $\al$.

The most obvious is clearly the relation $\les$ of subordination
itself. It is immediate from the definition that it is a partial order
on the set of all $E$-semigroups, hence on each subset $\Sub(\al)$.

\begin{lemma}\label{projles}
Let $\al$ be an $E$-semigroup and $\be, \ga \in \Sub(\al)$. Then $\ga
\les \be$ if and only if $\ga_t(I) \les \be_t(I)$ for all $t \ges 0$.
\end{lemma}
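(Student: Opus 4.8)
The plan is to use Theorem~\ref{subord Esg} to translate the statement entirely into the language of the associated local cocycles, and then reduce to the pointwise-in-$t$ statement about projections. By Theorem~\ref{subord Esg}, since $\be, \ga \in \Sub(\al)$, the families $B = (B_t = \be_t(I))_{t\ges 0}$ and $C = (C_t = \ga_t(I))_{t\ges 0}$ are projection-valued local $\al$-cocycles, with $\be_t(S) = B_t\al_t(S)$ and $\ga_t(S) = C_t\al_t(S)$ for all $S \in B(\Hil)$. The forward implication is immediate: if $\ga \les \be$ then $\be_t - \ga_t$ is completely positive, hence positive, so applying it to $I$ gives $\be_t(I) - \ga_t(I) \ges 0$, i.e.\ $B_t - C_t \ges 0$; since $B_t$ and $C_t$ are projections this forces $C_t \les B_t$. (Alternatively, one may simply cite that $\be$ and $\ga$ are themselves $E$-semigroups and apply Theorem~\ref{subord Esg} to the pair $(\be, \ga)$ once the converse is in hand.)

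For the converse, suppose $C_t \les B_t$ for all $t$. The key observation is that $\ga$ is an $E$-semigroup subordinate to $\be$: I would check that $\ga_t(S) = C_t\al_t(S) = C_t B_t \al_t(S) = C_t \be_t(S)$, using $C_t \les B_t$, so that $\ga_t = C_t\,\be_t(\cdot)$ with $C_t$ a projection. It remains to see that $C = (C_t)_{t\ges 0}$ is a local $\be$-cocycle, so that the ``moreover'' clause of Theorem~\ref{subord Esg} (applied with $\be$ in place of $\al$) yields $\ga \les \be$. Locality, $C_t \in \be_t(B(\Hil))'$, follows because $\be_t(B(\Hil)) = B_t\al_t(B(\Hil)) \subseteq \al_t(B(\Hil))$, and $C_t$ already commutes with $\al_t(B(\Hil))$. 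For the cocycle identity I would compute $\be_s(C_t) = C_s\al_s(C_t)$ — wait, more carefully: $\be_s(C_t) = B_s\al_s(C_t)$, and then $C_{s+t} = C_s\al_s(C_t)$ is the local $\al$-cocycle identity, so I need $C_s\al_s(C_t) = C_s\be_s(C_t) = C_s B_s\al_s(C_t)$, which again holds since $C_s \les B_s$. The continuity of $t\mapsto C_t$ is inherited (it is already a local $\al$-cocycle, hence strongly continuous), and $C_0 = I$.

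The main obstacle, such as it is, is bookkeeping: making sure that ``$C$ is a local $\be$-cocycle'' is verified from the axioms using only $C_t \les B_t$ and the already-known fact that $C$ is a local $\al$-cocycle, and in particular that the cocycle identity for $C$ relative to $\be$ really does follow from the identity relative to $\al$ after inserting the projection $B_s$. None of this is deep, but the cleanest writeup is to phrase everything through Theorem~\ref{subord Esg} and Lemma~\ref{subord maps} rather than re-deriving complete positivity of $\be_t - \ga_t$ by hand; the final appeal is to the ``moreover'' clause of Theorem~\ref{subord Esg} with the roles of $\al$ and $\be$ interchanged, which immediately gives that $\ga$ is subordinate to $\be$, i.e.\ $\ga \les \be$.
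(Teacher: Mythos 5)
Your argument is correct in substance but takes a different route from the paper. The paper's converse is a one-line direct verification of complete positivity: since $\ga_t(I)\les\be_t(I)$, the difference $P_t:=\be_t(I)-\ga_t(I)$ is again a projection lying in $\al_t\bigl(B(\Hil)\bigr)'$, and
\[
\be_t(S)-\ga_t(S)=P_t\,\al_t(S)=P_t\,\al_t(S)\,P_t ,
\]
which is manifestly CP. You instead re-package $\ga$ as a subordinate of $\be$: you check that $\ga_t(S)=C_t\be_t(S)$ with $C_t=\ga_t(I)$ a projection commuting with $\be_t\bigl(B(\Hil)\bigr)$, and then invoke the ``moreover'' clause of Theorem~\ref{subord Esg} (with $\be$ playing the role of $\al$). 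That works, and in fact you do not even need the $\be$-cocycle identity you verify: Lemma~\ref{subord maps} applied pointwise in $t$ (with the projection $C_t\in\be_t(B(\Hil))'$) already gives that $\be_t-\ga_t$ is CP for every $t$, which is the definition of $\ga\les\be$. So your detour through the cocycle axioms is harmless but superfluous; the paper's route is shorter because it never changes the reference semigroup.

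One justification in your writeup is wrong as stated, though the conclusion it supports is true. You claim $\be_t\bigl(B(\Hil)\bigr)=B_t\al_t\bigl(B(\Hil)\bigr)\subseteq\al_t\bigl(B(\Hil)\bigr)$ to deduce $C_t\in\be_t(B(\Hil))'$. That inclusion fails in general: $B_t$ lies in the commutant of $\al_t(B(\Hil))$, and a product of a commutant element with an algebra element need not lie in the algebra (e.g.\ for the CCR flow, $\si_t(B(\init\ot\Fock))$ consists of operators whose $\Fock_{[0,t[}$-component is scalar, while $B_t\si_t(S)$ generally has a nontrivial projection there). The correct, and easier, argument is: $C_t\les B_t$ forces the projections to commute, $C_tB_t=B_tC_t=C_t$, and $C_t\in\al_t(B(\Hil))'$, so $C_t\be_t(S)=C_tB_t\al_t(S)=B_t\al_t(S)C_t=\be_t(S)C_t$. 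With that repair your proof is complete.
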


\begin{proof}
If $\ga \les \be$ then in particular $(\be_t-\ga_t) (I) \ges 0$, and
so $\ga_t(I) \les \be_t(I)$.

Conversely, suppose that we have $\ga_t(I) \les \be_t(I)$ for each
$t$. Now by Theorem~\ref{subord Esg}, the families $(\be_t(I))_{t \ges
0}$ and $(\ga_t(I))_{t \ges 0}$ are projection-valued, local
$\al$-cocycles, and also each $\be_t(I)-\ga_t(I)$ is a projection,
with
\[
\be_t(S) -\ga_t(S) = \bigl(\be_t(I) -\ga_t(I)\bigr) \al_t(S) =
\bigl(\be_t(I) -\ga_t(I)\bigr) \al_t(S)  \bigl(\be_t(I)
-\ga_t(I)\bigr)
\]
for all $S \in B(\Hil)$, so that $\be_t -\ga_t$ is CP as required.
\end{proof}

\begin{remark}
The temptation to adjust the hypotheses above and deal with all
$E$-semigroups rather than those from $\Sub(\al)$ should be
resisted. Whilst it is true that given \emph{any} two $E$-semigroups
$\be$ and $\ga$, if $\be \ges \ga$ then $\be_t(I) \ges \ga_t(I)$, the
converse is not true: if $\be$ and $\ga$ are $E_0$-semigroups then
obviously $\be_t(I) = \ga_t(I)$, but we need not have $\be =
\ga$.
\end{remark}

Let us apply this to the set $\Sub(\si)$ for the CCR flow $\si =
\si^{\init,\kil}$. By Theorem~\ref{subord Esg} any $E$-semigroup $\al$
subordinate to $\si$ is determined by a projection-valued, local
$\si$-cocycle $(X^\al_t = \al_t(I))_{t \ges 0}$, and from
Lemma~\ref{locality}, Theorem~\ref{cocqsde} and
Proposition~\ref{projcoc}, this in turn is specified uniquely by a
pair $(P_\al,u_\al)$ where $P_\al \in B(\noise)$ is a projection, and
$u_\al \in \Ker P_\al$, through
\begin{equation}\label{localprojgen}
X^\al = X^F \ \text{ for } \ F = I_\init \ot \begin{bmatrix}
-\norm{u_\al}^2 & \bra{u_\al} \\ \ket{u_\al} &
-P_\al^\perp \end{bmatrix}.
\end{equation}
So, from part~(a) of Theorem~\ref{compareprojcocs}, we get the
following.

\begin{proposition}
Suppose that $\al, \be \in \Sub(\si)$ with associated
projection-valued, local $\si$-cocycles $X^\al$ and $X^\be$, whose
stochastic generators have the form~\eqref{localprojgen}. Then $\al
\les \be$ if and only if
\begin{rlist}
\item
$P_\al \les P_\be$ and
\item
$u_\al = u_\be +u_{\al\be}$ for some $u_{\al\be} \in P_\be(\noise)
\cap P_\al^\perp(\noise)$.
\end{rlist}
\end{proposition}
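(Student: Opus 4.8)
The plan is to obtain the result by chaining together Lemma~\ref{projles}, part~(a) of Theorem~\ref{compareprojcocs}, and the explicit description~\eqref{localprojgen} of the generators of local projection-valued $\si$-cocycles. First I would use Lemma~\ref{projles}: since $\al, \be \in \Sub(\si)$, we have $\al \les \be$ if and only if $\al_t(I) \les \be_t(I)$ for all $t \ges 0$, and as $\al_t(I) = X^\al_t$ and $\be_t(I) = X^\be_t$ this says precisely that $X^\al_t \les X^\be_t$ for all $t$. So the whole proposition reduces to translating the condition ``$X^\al_t \les X^\be_t$ for all $t$'' into conditions on the pairs $(P_\al, u_\al)$ and $(P_\be, u_\be)$.

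Next I would apply Theorem~\ref{compareprojcocs}(a) with $X^F = X^\al$ and $X^G = X^\be$, whose stochastic generators have the local form~\eqref{localprojgen}. Matching~\eqref{localprojgen} against the block form $F = \begin{sbmatrix} -L^*L & L^* \\ L & P - I \end{sbmatrix}$ of Proposition~\ref{projcoc}(iii), one reads off $L = I_\init \ot \ket{u_\al}$ (so that $L^*L = \norm{u_\al}^2 I_\init$) and, since $-P_\al^\perp = I_\init \ot P_\al - I_{\init \ot \noise}$, the ``$P$'' of the theorem is $I_\init \ot P_\al$; likewise $M = I_\init \ot \ket{u_\be}$ and ``$Q$'' is $I_\init \ot P_\be$. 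Condition (iii) of Theorem~\ref{compareprojcocs}(a) then becomes $I_\init \ot P_\al \les I_\init \ot P_\be$, i.e. $P_\al \les P_\be$ (which is condition (i)), together with $M = Q^\perp L$, i.e. $I_\init \ot \ket{u_\be} = I_\init \ot \ket{P_\be^\perp u_\al}$, which is the vector identity $u_\be = P_\be^\perp u_\al$.

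The last step is to check that, given the standing constraints $u_\al \in \Ker P_\al$, $u_\be \in \Ker P_\be$ together with $P_\al \les P_\be$, the identity $u_\be = P_\be^\perp u_\al$ is equivalent to condition~(ii). For one direction, set $u_{\al\be} := u_\al - u_\be = P_\be u_\al$; this lies in $P_\be(\noise)$ by construction, and in $P_\al^\perp(\noise)$ because $P_\al \les P_\be$ gives $P_\al P_\be = P_\al$ and hence $P_\al u_{\al\be} = P_\al u_\al = 0$, while plainly $u_\al = u_\be + u_{\al\be}$. Conversely, applying $P_\be^\perp$ to $u_\al = u_\be + u_{\al\be}$ and using $P_\be^\perp u_\be = u_\be$ and $P_\be^\perp u_{\al\be} = 0$ recovers $u_\be = P_\be^\perp u_\al$. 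I expect the only mildly delicate point to be the bookkeeping of the second step, namely correctly identifying the ampliated data in~\eqref{localprojgen} with the $2\times 2$ block form of Theorem~\ref{compareprojcocs} and observing that the cocycle condition $M = Q^\perp L$ collapses to the single vector equation $u_\be = P_\be^\perp u_\al$; once that dictionary is fixed, the remaining argument is the elementary projection manipulation above.
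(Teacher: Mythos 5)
Your proposal is correct and follows exactly the route the paper intends: reduce $\al \les \be$ to $X^\al_t \les X^\be_t$ via Lemma~\ref{projles}, apply Theorem~\ref{compareprojcocs}(a) with the identifications $L = I_\init \ot \ket{u_\al}$, $P = I_\init \ot P_\al$ (and likewise for $\be$), and then translate $u_\be = P_\be^\perp u_\al$ into condition~(ii) by elementary projection algebra. The paper leaves these steps implicit (``from part~(a) of Theorem~\ref{compareprojcocs} we get the following''), and your write-up supplies precisely that dictionary and verification, so nothing further is needed.
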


\begin{example}\label{L2eg}
Let $\init = \Comp$ and $\noise = L^2[0,1]$. For each $r \in [0,1]$
let $P_r$ denote the projection $P_r f = f \ind_{[0,r]}$ on $\noise$.
Then define $\al^{(r)}$ to be the $E$-semigroup associated to the
projection-valued, local $\si$-cocycle with stochastic generator
\[
F_r = \begin{bmatrix} 0 & 0 \\ 0 & P_r -I \end{bmatrix}.
\]
It follows that for all $r \les s$ in $[0,1]$ we have $\al^{(r)} \les
\al^{(s)}$, that is, $\Sub(\si)$ contains an uncountable, linearly
ordered subset. This can only happen since we have taken an infinite
dimensional $\noise$, although the noise dimension space $\noise$ is
separable. In the case of a finite dimensional $\noise$ the maximum
number of distinct semigroups in a chain in $\Sub(\si)$ is $1 +\dim
\noise$.
\end{example}

In the theory of $E$-semigroups, a trick to overcome such features of
relations is to consider semigroups up to some form of equivalence.

\begin{definition}\label{cocconj}
$E$-semigroups $\al$ and $\be$ on $B(\Hil)$ are \emph{cocycle
conjugate} if there is a left $\al$-cocycle $U$ such that
\begin{rlist}
\item
$\be_t(S) = U_t \al_t(S) U^*_t$ for all $t \ges 0, S \in B(\Hil)$, and
\item
$\al_t(I) = U^*_t U_t$.
\end{rlist}
This is denoted $\al \sim \be$, or $\al \sim_U \be$ if we want to
highlight the particular cocycle.
\end{definition}

Most of the literature in this area deals solely with
$E_0$-semigroups, when a seemingly different definition of cocycle
conjugacy is found: $E_0$-semigroups $\al$ and $\be$ are called
cocycle conjugate if there is a unitary, left $\al$-cocycle $U$ such
that condition~(i) above holds. However, note that if $\al$ and $\be$
are $E_0$-semigroups above, then conditions~(i) and~(ii) together
imply that the $U_t$ are unitaries. Since we are forced to deal in
this paper with general $E$-semigroups (recall Remark~\ref{subord E
rem} (b)), we need the version of cocycle conjugacy given in
Definition~\ref{cocconj}, where the $U_t$ are partial isometries
courtesy of~(ii), but not necessarily unitaries.

\begin{proposition}
The relation $\sim$ is an equivalence relation on the set of all
$E$-semigroups on $B(\Hil)$.
\end{proposition}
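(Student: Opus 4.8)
The plan is to verify the three defining properties of an equivalence relation — reflexivity, symmetry, transitivity — directly from Definition~\ref{cocconj}, the only subtlety being the bookkeeping with the partial-isometry condition~(ii), since the $U_t$ are not assumed unitary.

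For reflexivity, take $U_t := \al_t(I)$. This is a projection-valued family, and by Remark~\ref{subord E rem}\,(a) (applied with $P = (\al_t(I))_t$, which is trivially a local $\al$-cocycle since $\al_t(I) \in \al_t(B(\Hil))'$) it is a local $\al$-cocycle; in particular it is a left $\al$-cocycle. Then $U_t \al_t(S) U_t^* = \al_t(I)\al_t(S)\al_t(I) = \al_t(S)$ (using that $\al_t$ is multiplicative and $\al_t(I)$ is the unit of its range), so~(i) holds, and $U_t^* U_t = \al_t(I)$, so~(ii) holds. Hence $\al \sim_U \al$.

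For symmetry, suppose $\al \sim_U \be$. The natural candidate cocycle exhibiting $\be \sim \al$ is $V_t := U_t^*$; one must check $V$ is a left $\be$-cocycle satisfying the two conditions with the roles of $\al,\be$ swapped. From $U_t^* U_t = \al_t(I)$ and $U_0 = I$ one first deduces $U_t U_t^* = \be_t(I)$: indeed $\be_t(S) = U_t \al_t(S) U_t^*$ forces $\be_t(I) = U_t \al_t(I) U_t^* = U_t U_t^* U_t U_t^* = (U_tU_t^*)^2$, so $U_tU_t^*$ is the projection $\be_t(I)$. Then $V_t^* V_t = U_t U_t^* = \be_t(I)$, giving condition~(ii) for $\be$. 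For condition~(i) one checks $V_t \be_t(S) V_t^* = U_t^* U_t \al_t(S) U_t^* U_t = \al_t(I)\al_t(S)\al_t(I) = \al_t(S)$. The left $\al$-cocycle identity $U_{s+t} = U_s\al_s(U_t)$ must be converted into the left $\be$-cocycle identity $V_{s+t} = V_s\be_s(V_t)$; taking adjoints gives $U_{s+t}^* = \al_s(U_t^*)U_s^*$, and one rewrites $\al_s(U_t^*) = U_s^*\be_s(U_t^*U_tU_t^*)U_s \cdot(\text{correction})$ — more cleanly, $\be_s(V_t) = \be_s(U_t^*) = U_s\al_s(I)\al_s(U_t^*)\al_s(I)U_s^*$ after inserting $U_s^*U_s$, and one combines this with $\al_s(U_t^*)U_s^* = U_{s+t}^*$ together with $U_s^* U_s U_s^* = U_s^*$ (from $(U_sU_s^*)^2 = U_sU_s^*$, hence $U_s^*$ is a partial isometry adjoint) to land on $V_s\be_s(V_t) = U_s^* U_s U_{s+t}^* U_s^* U_s$; the outer factors are absorbed using $U_{s+t}^* = U_{s+t}^*\be_{s+t}(I) = \ldots$ and the cocycle relation at level $s+t$. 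Strong continuity of $t\mapsto V_t$ and $t\mapsto V_t^*$ is inherited from that of $U^*$ and $U$. Thus $\be \sim_V \al$.

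For transitivity, suppose $\al \sim_U \be$ and $\be \sim_W \ga$; the candidate is $Z_t := W_t U_t$, and one must check it is a left $\al$-cocycle realizing $\al \sim_Z \ga$. Condition~(i): $Z_t \al_t(S) Z_t^* = W_t U_t \al_t(S) U_t^* W_t^* = W_t \be_t(S) W_t^* = \ga_t(S)$, immediately. Condition~(ii): $Z_t^* Z_t = U_t^* W_t^* W_t U_t = U_t^* \be_t(I) U_t = U_t^* U_t U_t^* U_t = \al_t(I)$, using $W_t^*W_t = \be_t(I) = U_tU_t^*$ established above. The cocycle identity needs $Z_{s+t} = Z_s\al_s(Z_t)$, i.e. $W_{s+t}U_{s+t} = W_sU_s\al_s(W_tU_t)$; expanding the right side via the $\be$-cocycle identity for $W$ and the $\al$-cocycle identity for $U$, and using $U_s\al_s(W_t)U_s^* = \be_s(W_t)$ (which follows from condition~(i) for $U$ applied with $S = W_t$, noting $W_t$ acts on the same space), one gets $W_sU_s\al_s(W_t)\al_s(U_t) = W_s\be_s(W_t)U_s\al_s(U_t) = W_{s+t}U_{s+t}$ — the needed interchange being exactly $U_s\al_s(W_t)\al_s(U_t) = U_s\al_s(W_t)U_s^* \cdot U_s\al_s(U_t) = \be_s(W_t)U_{s+t}$, valid since $U_s^*U_s = \al_s(I) = \al_s(W_t^*W_t\cdot\text{idempotent})$ absorbs against $\al_s(U_t) = \al_s(I)\al_s(U_t)$. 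Continuity of $t\mapsto Z_t = W_tU_t$ and its adjoint follows from joint strong continuity of the two factor families together with uniform boundedness (all operators are contractions). Hence $\al \sim_Z \ga$.

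The main obstacle is the middle verification: confirming the cocycle \emph{identities} for $V = U^*$ and for $Z = WU$, because condition~(ii) only gives partial isometries, so one must repeatedly insert the projections $U_t^*U_t = \al_t(I)$ and $U_tU_t^* = \be_t(I)$ in the right places and exploit that $\al_s$, $\be_s$ are unital-on-their-range $*$-homomorphisms intertwined by $U$. Once one records the lemma $U_tU_t^* = \be_t(I)$ at the outset, all three parts become routine manipulations. I would present reflexivity and transitivity briefly and devote the bulk of the write-up to symmetry, or — cleaner still — simply remark that symmetry is the $U\mapsto U^*$ case of the computations already done, since $\be\sim_U\al$ with the cocycle $U$ itself read as a \emph{right} $\be$-cocycle is essentially Definition~\ref{cocconj} read backwards.
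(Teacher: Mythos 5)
Your strategy is the same as the paper's --- reflexivity with $U_t=\al_t(I)$, symmetry with $V=U^*$, transitivity with the pointwise product --- and your reflexivity and transitivity arguments, as well as the preliminary identification $U_tU_t^*=\be_t(I)$ and the verification of conditions (i) and (ii) for $V$ and $Z$, are all fine. The trouble is the one computation on which symmetry genuinely hinges, namely the cocycle identity $V_{s+t}=V_s\be_s(V_t)$. The manipulation you describe claims $V_s\be_s(V_t)=U_s^*U_sU_{s+t}^*U_s^*U_s$ and then ``absorbs the outer factors''; but the right-hand absorption $U_{s+t}^*U_s^*U_s=U_{s+t}^*$ amounts to $\al_s(I)U_{s+t}=U_{s+t}$, and nothing in Definition~\ref{cocconj} delivers this: cocycle conjugates need not be subordinates, so the final projection $U_sU_s^*=\be_s(I)$ need not sit under the initial projection $U_s^*U_s=\al_s(I)$. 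Concretely, on $\Hil=L^2(\Rplus)$ let $L$ be the right-shift isometry semigroup, $\al_t(S)=L_tSL_t^*$ and $U_t=L_t^*$; then $U$ is a left $\al$-cocycle with $U_t^*U_t=\al_t(I)$ and $\be=\id_{B(\Hil)}$, so $\al\sim_U\be$, yet $U_{s+t}^*U_s^*U_s=L_{s+t}L_sL_s^*\neq L_{s+t}=U_{s+t}^*$, and indeed your intermediate identity already fails there ($V_s\be_s(V_t)=L_{s+t}$ while $U_s^*U_sU_{s+t}^*U_s^*U_s=L_{s+t}L_sL_s^*$). So the symmetry step does not go through as written.

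The repair is a one-line computation, and it is exactly the paper's (read in the other direction): using (i) with $S=U_t^*$ and (ii),
\[
V_s\be_s(V_t)=U_s^*\,U_s\al_s(U_t^*)U_s^*=\al_s(I)\al_s(U_t^*)U_s^*
=\al_s(U_t^*)U_s^*=\bigl(U_s\al_s(U_t)\bigr)^*=U_{s+t}^*=V_{s+t},
\]
with no insertions or absorptions needed. With this substitution your argument coincides with the paper's proof. Note the contrast with your transitivity step, which is sound: there the projection $\al_s(I)=U_s^*U_s$ is inserted between $\al_s(W_t)$ and $\al_s(U_t)$, i.e.\ between two elements of the range of $\al_s$, where it acts as the unit --- precisely the property that is unavailable in the place you tried to use it in the symmetry argument.
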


\begin{proof}
We have $\al \sim_U \al$ for $U_t = \al_t(I)$. If $\al \sim_U \be$
then
\[
U^*_{s+t} = \al_s(U^*_t) U^*_s = \al_s(I) \al_s(U^*_t) U^*_s = U^*_s
U_s \al_s(U^*_t) U^*_s = U^*_s \be_s(U^*_t),
\]
so that $U^*$ is a left $\be$-cocycle, with $\be_t(I) = U_t \al_t(I)
U^*_t = (U_t U^*_t)^2 = U_t U^*_t$, and $U^*_t \be_t(S) U_t =
\al_t(S)$, so that $\be \sim_{U^*} \al$. Similar computations show
that if $\al \sim_U \be$ and $\be \sim_V \ga$ then $\al \sim_{VU} \ga$,
where $VU = (V_tU_t)_{t \ges 0}$ is a left $\al$-cocycle.
\end{proof}

The following relation was then discussed in~\cite{poNottm}.

\begin{definition}
Let $\Hil$ be a Hilbert space and let $\al, \be$ be $E$-semigroups on
$B(\Hil)$. Write $\al \cles \be$ if $\al \sim \al'$ for some other
$E$-semigroup $\al'$ for which $\al' \les \be$.
\end{definition}

\begin{proposition}\label{poOrder}
The relation $\cles$ is reflexive and transitive.
\end{proposition}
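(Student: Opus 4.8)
The plan is to verify the two properties directly from the definition of $\cles$ and the fact (established in the preceding proposition) that $\sim$ is an equivalence relation and that $\les$ is a partial order on the set of all $E$-semigroups.

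First I would prove reflexivity: given any $E$-semigroup $\al$, we have $\al \sim \al$ (using $U_t = \al_t(I)$ as in the proof that $\sim$ is an equivalence relation) and $\al \les \al$ trivially, so taking $\al' = \al$ in the definition shows $\al \cles \al$.

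Next, transitivity. Suppose $\al \cles \be$ and $\be \cles \ga$. By definition there is an $E$-semigroup $\al'$ with $\al \sim \al'$ and $\al' \les \be$, and an $E$-semigroup $\be'$ with $\be \sim \be'$ and $\be' \les \ga$. To conclude $\al \cles \ga$ I need to produce an $E$-semigroup $\al''$ with $\al \sim \al''$ and $\al'' \les \ga$. The natural candidate is obtained by ``transporting'' $\al'$ along the cocycle conjugacy $\be \sim \be'$: if $\be \sim_V \be'$ via a left $\be$-cocycle $V$, one expects that $V$ also implements a cocycle conjugacy between $\al'$ and some $\al''$, because $\al' \les \be$ means $\al'_t(S) = \be'_t(I)\be_t(S)$ — no wait, $\al'_t(S) = B_t \be_t(S)$ where $B_t = \al'_t(I)$ is a projection-valued local $\be$-cocycle (Theorem~\ref{subord Esg}). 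Then setting $\al''_t(S) := V_t \al'_t(S) V_t^*$ should define an $E$-semigroup subordinate to $\be'_t(I)\cdot$, hence subordinate to $\be'$, hence (since $\be' \les \ga$ and $\les$ is transitive) subordinate to $\ga$; and $\al'' \sim \al'$ via $V$. Combining with $\al \sim \al'$ and transitivity of $\sim$ gives $\al \sim \al''$, so $\al \cles \ga$.

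The main obstacle is checking that $\al''_t(S) := V_t\al'_t(S)V_t^*$ genuinely satisfies conditions (i) and (ii) of cocycle conjugacy with base semigroup $\al'$, and that it is in fact subordinate to $\ga$ rather than merely to $\be'$. For the former, condition (ii) requires $\al'_t(I) = V_t^*V_t$, but we only know $\be_t(I) = V_t^*V_t$; however $\al'_t(I) = B_t \le \be_t(I)$, so $V_t^*V_t$ does not equal $\al'_t(I)$ in general, and one must instead replace $V$ by the cut-down cocycle $W_t := V_t B_t$ (equivalently $V_t \al'_t(I)$) and check that $W$ is still a left $\al'$-cocycle with $W_t^*W_t = B_t \be_t(I) B_t = B_t = \al'_t(I)$, using that $B$ is a \emph{local} $\be$-cocycle so that $B_t$ commutes with $\be_t(B(\Hil))$. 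For the subordination claim, one verifies that $\al''_t(I) = W_t W_t^* \le V_t V_t^* = \be'_t(I)$ is a projection-valued local $\be'$-cocycle and that $\al''_t(S) = \al''_t(I)\be'_t(S)$, whence $\al'' \les \be'$ by Theorem~\ref{subord Esg}, and then $\al'' \les \ga$ by transitivity of $\les$. These verifications are routine once the right cut-down cocycle $W$ is identified; that identification is the only genuinely non-formal point.
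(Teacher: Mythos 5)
Your proposal is correct and follows essentially the same route as the paper: the key step in both is to replace $V$ by the cut-down cocycle $V_t\al'_t(I)$ (your $W$), check it is a left $\al'$-cocycle with $W_t^*W_t=\al'_t(I)$ using locality of the projection cocycle $\bigl(\al'_t(I)\bigr)_{t\ges 0}$, and conjugate $\al'$ by it to get $\al''$ with $\al''\les\be'\les\ga$. The only cosmetic difference is in verifying $\al''\les\be'$: the paper simply computes $\be'_t(S)-\al''_t(S)=V_t(\be_t-\al'_t)(S)V_t^*$, which is manifestly completely positive, whereas you route this through Theorem~\ref{subord Esg}; both work.
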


\begin{proof}
Reflexivity is immediate: $\al \sim \al$ and $\al \les \al$, so that
$\al \cles \al$. To establish transitivity assume that $\al \cles \be$
and $\be \cles \ga$. So there are $E$-semigroups $\al'$ and $\be'$ and
cocycles $U$ and $V$ such that
\[
\al \sim_U \al', \quad \al' \les \be, \quad \be \sim_V \be' \ \text{
and } \ \be' \les \ga.
\]
Set $V'_t = V_t \al'_t(I)$, then, using Theorem~\ref{subord Esg},
\[
V'_{s+t} = V_s \be_s (V_t) \al'_s \bigl(\al'_t(I)\bigr) = V_s \be_s
(V_t) \be_s \bigl(\al'_t(I)\bigr) \al'_s(I) = V'_s \al'_s (V'_t),
\]
so that $V'$ is a left $\al'$-cocycle. Moreover
\[
(V'_t)^* V'_t = \al'_t(I) V^*_t V_t \al'_t(I) = \al'_t(I) \be_t(I)
\al'_t(I) = \al'_t(I).
\]
Consequently, if we define
\[
\al''_t(S) = V'_t \al'_t(S) (V'_t)^*,
\]
then $\al''$ is an $E$-semigroup, with $\al' \sim_{V'} \al''$ by
construction. Also
\[
\be'_t(S) -\al''_t(S) = V_t \be_t(S) V_t^* -V'_t \al'_t(S) (V'_t)^* =
V_t (\be_t -\al'_t) (S) V_t^*,
\]
so that $\al'' \les \be'$. But then $\al \sim \al' \sim \al''$ and
$\al'' \les \be' \les \ga$, showing that $\al \cles \ga$.
\end{proof}

However, as noted in~\cite{poNottm}, the problem is proving that
$\cles$ is antisymmetric on the set of all $E$-semigroups considered
up to cocycle conjugacy. That is, if $\al \cles \be$ and $\be \cles
\al$ do we necessarily have $\al \sim \be$? New results of Liebscher
suggests that it is \emph{not} antisymmetric (\cite{nonsym}).

Inspired by the results of Section~2, we instead introduce an
alternative relation on the subset $\Sub(\al)$, rather than deal with
all $E$-semigroups simultaneously.

\begin{definition}
Let $\al$ be an $E$-semigroup on $B(\Hil)$, and let $\be, \ga, \de \in
\Sub(\al)$. We define $\be \sim^\al \ga$ if there is a local
$\al$-cocycle $U$ such that
\[
\be_t(I) = U^*_t U_t \ \text{ and } \ \ga_t(I) = U_t U^*_t \ \text{
for all } t \ges 0.
\]
We write $\be \sim^\al_U \ga$ to highlight the particular
$\al$-cocycle $U$. In addition we define $\be \cles^\al \de$ if there
is some $\be' \in \Sub(\al)$ such that $\be \sim^\al \be'$ with $\be'
\les \de$.
\end{definition}

These relations behave similarly to the versions without the
superscript $\al$.

\begin{proposition}
The relation $\sim^\al$ on $\Sub(\al)$ is an equivalence relation. The
relation $\cles^\al$ is reflexive and transitive.
\end{proposition}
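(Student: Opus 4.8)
The plan is to mimic, line by line, the earlier proofs that $\sim$ is an equivalence relation and that $\cles$ is reflexive and transitive, the only genuinely new point being that every cocycle produced along the way is checked to be \emph{local}, not merely a left cocycle. Three preliminary observations carry the weight: (1) for any $\be\in\Sub(\al)$ the family $(\be_t(I))_{t\ges0}$ is a projection-valued local $\al$-cocycle, by Theorem~\ref{subord Esg}; (2) if $U$ is a local $\al$-cocycle then so is $U^*$, since $\al_t(B(\Hil))'$ is a $*$-algebra and a local cocycle is simultaneously a left and a right cocycle, so taking adjoints in $U_{s+t}=U_s\al_s(U_t)=\al_s(U_t)U_s$ gives $U^*_{s+t}=\al_s(U^*_t)U^*_s=U^*_s\al_s(U^*_t)$; and (3) if $U,V$ are local $\al$-cocycles then $(V_tU_t)_{t\ges0}$ is one too — it is contraction-valued (with the continuity of $t\mapsto V_tU_t$ and its adjoint inherited from those of $U$ and $V$), $V_tU_t\in\al_t(B(\Hil))'$, and $V_{s+t}U_{s+t}=V_s\al_s(V_t)U_s\al_s(U_t)=V_sU_s\al_s(V_t)\al_s(U_t)=(V_sU_s)\al_s(V_tU_t)$, the middle step being legitimate because $U_s\in\al_s(B(\Hil))'$ commutes with $\al_s(V_t)$. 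The one computational lemma used repeatedly is that if $U_t$ is a partial isometry and $p$ a projection with $U_tU^*_t\les p$, then $pU_t=U_t$ and hence $U^*_tpU_t=U^*_tU_t$.

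For $\sim^\al$ on $\Sub(\al)$: reflexivity is $\be\sim^\al_U\be$ with $U_t=\be_t(I)$, using (1). Symmetry: if $\be\sim^\al_U\ga$ then by (2) $U^*$ is a local $\al$-cocycle with $(U^*_t)^*U^*_t=U_tU^*_t=\ga_t(I)$ and $U^*_t(U^*_t)^*=U^*_tU_t=\be_t(I)$, so $\ga\sim^\al_{U^*}\be$. Transitivity: if $\be\sim^\al_U\ga$ and $\ga\sim^\al_V\de$, put $W=VU$; by (3) it is a local $\al$-cocycle, and $W^*_tW_t=U^*_t(V^*_tV_t)U_t=U^*_t\ga_t(I)U_t=U^*_t(U_tU^*_t)U_t=\be_t(I)$, while $W_tW^*_t=V_t(U_tU^*_t)V^*_t=V_t\ga_t(I)V^*_t=(V_tV^*_t)^2=\de_t(I)$, so $\be\sim^\al_W\de$.

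For $\cles^\al$: reflexivity is immediate since $\be\sim^\al\be$ and $\be\les\be$. For transitivity suppose $\be\cles^\al\ga$ and $\ga\cles^\al\de$, so there are $\be',\ga'\in\Sub(\al)$ and local $\al$-cocycles $U,V$ with $\be\sim^\al_U\be'$, $\be'\les\ga$, $\ga\sim^\al_V\ga'$, $\ga'\les\de$; by Lemma~\ref{projles} the last two inequalities read $\be'_t(I)\les\ga_t(I)$ and $\ga'_t(I)\les\de_t(I)$. Set $W=VU$, a local $\al$-cocycle by (3). From $U_tU^*_t=\be'_t(I)\les\ga_t(I)=V^*_tV_t$ and the partial-isometry lemma, $\ga_t(I)U_t=U_t$, so $W^*_tW_t=U^*_t\ga_t(I)U_t=U^*_tU_t=\be_t(I)$; and $W_tW^*_t=V_t\be'_t(I)V^*_t$ is a projection dominated by $V_tV^*_tV_tV^*_t=(V_tV^*_t)^2=\ga'_t(I)\les\de_t(I)$. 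Moreover $(W_tW^*_t)_{t\ges0}$ is itself a projection-valued local $\al$-cocycle: it inherits membership in $\al_t(B(\Hil))'$ from $W$, and since $W_s$ commutes with $\al_s(W_tW^*_t)$ one has $W_{s+t}W^*_{s+t}=W_s\al_s(W_tW^*_t)W^*_s=W_sW^*_s\,\al_s(W_tW^*_t)=\al_s(W_tW^*_t)\,W_sW^*_s$, a two-sided cocycle identity; so by Theorem~\ref{subord Esg} it equals $\be''_t(I)$ for a unique $\be''\in\Sub(\al)$. Then $\be\sim^\al_W\be''$ and, by Lemma~\ref{projles}, $\be''\les\de$, so $\be\cles^\al\de$.

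There is no deep obstacle here; the one thing requiring sustained care is the preservation of \emph{locality} at every stage — checking both that the new operators land in the correct commutants $\al_t(B(\Hil))'$ and that the resulting families satisfy the (two-sided) cocycle identity. The commutation relations between $U_s$ (or $W_s$) and the values $\al_s(\cdot)$ of later cocycle data are precisely what make this go through, and the partial-isometry identity $\ga_t(I)U_t=U_t$ (valid because $U_tU^*_t\les\ga_t(I)$) is what reconciles initial and final projections when composing equivalences.
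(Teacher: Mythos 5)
Your proof is correct and follows essentially the same route as the paper's: reflexivity via $U_t=\be_t(I)$, symmetry via $U^*$, transitivity via pointwise products of local cocycles, and, for $\cles^\al$, realising the new subordinate $\be''$ from a projection-valued local cocycle through Theorem~\ref{subord Esg} and comparing it to $\de$ via Lemma~\ref{projles}. The only cosmetic difference is that you work with $W=VU$ directly, whereas the paper first forms $V'_t=V_t\be'_t(I)$ and chains $\be\sim^\al_U\be'\sim^\al_{V'}\be''$; since $W_tW^*_t=V_t\be'_t(I)V^*_t=V'_t(V'_t)^*$, both constructions yield the same $\be''$.
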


\begin{proof}
If $\be \in \Sub(\al)$ then $U_t = \be_t(I)$ is a local $\al$-cocycle
for which $\be \sim^\al_U \be$, so that $\sim^\al$ is
reflexive. Symmetry is obvious, since if $U$ is a local $\al$-cocycle
then so is $U^*$. Finally, transitivity follows since if $U$ and $V$
are local $\al$-cocycles, then so is the pointwise product $VU =
(V_tU_t)_{t \ges 0}$, from which we quickly get that if $\be
\sim^\al_U \ga$ and $\ga \sim^\al_V \de$, then $\be \sim^\al_{VU}
\de$.

For the putative partial order $\cles^\al$, reflexivity follows as for
$\cles$. For transitivity, suppose that $\be, \ga, \de \in \Sub(\al)$
with $\be \cles^\al \ga$ and $\ga \cles^\al \de$, through the use of
local $\al$-cocycles $U$ and $V$ for which
\[
\be \sim^\al_U \be', \quad \be' \les \ga, \quad \ga \sim^\al_V \ga' \
\text{ and } \ \ga' \les \de.
\]
Set $V'_t = V_t \be'_t(I)$, a pointwise product of local
$\al$-cocycles, hence itself a local $\al$-cocycle. Moreover
\[
\Ran \be'_t(I) \subset \Ran \ga_t(I) = \text{initial space of } V_t,
\]
so that $V'$ is partial isometry-valued with initial projection
$\be'_t(I)$. Let $P_t = V'_t (V'_t)^*$, a projection-valued, local
$\al$-cocycle, and let $\be'' \in \Sub(\al)$ be the $E$-semigroup
determined by this $P$ through Theorem~\ref{subord Esg}. Then
$\be''_t(I) = P_t = V'_t (V'_t)^*$, hence $\be \sim^\al_U \be'
\sim^\al_{V'} \be''$. Moreover,
\[
\be''_t(I) = V_t \be'_t(I) V_t^* \les V_t V_t^* = \ga'_t(I)
\]
so that $\be'' \les \ga' \les \de$. Thus $\be \cles^\al \de$ as
required.
\end{proof}

For the CCR flow the missing piece of the puzzle is provided by
Theorem~\ref{compareprojcocs}.

\begin{proposition}
For the CCR flow $\si$, if $\al, \be \in \Sub(\si)$ with $\al
\cles^\si \be$ and $\be \cles^\si \al$ then $\al \sim^\si \be$.
\end{proposition}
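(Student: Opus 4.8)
The plan is to translate the whole statement into Murray--von Neumann comparison theory for projections in the von Neumann algebra $B(\noise)$, where antisymmetry of subequivalence (up to equivalence) is exactly the Cantor--Schr\"oder--Bernstein theorem. First I would unpack the two hypotheses: by definition of $\cles^\si$ there exist $\al', \be' \in \Sub(\si)$ and local $\si$-cocycles $U,V$ with $\al \sim^\si_U \al'$, $\al' \les \be$, $\be \sim^\si_V \be'$ and $\be' \les \al$. By~\eqref{localprojgen} (which rests on Lemma~\ref{locality}, Theorem~\ref{cocqsde} and Proposition~\ref{projcoc}) each of $\al,\be,\al',\be'$ is encoded by a projection $P_\bullet \in B(\noise)$ together with a vector $u_\bullet \in \Ker P_\bullet$; only the projections will matter. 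From Lemma~\ref{projles} and part~(a) of Theorem~\ref{compareprojcocs}, the subordinations $\al' \les \be$ and $\be' \les \al$ give $P_{\al'} \les P_\be$ and $P_{\be'} \les P_\al$ (the accompanying conditions on the vectors are irrelevant here).

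Next I would pin down $\sim^\si$ in the same language. The generators $F$ of $\al_\bullet(I)$ and $G$ of $\al'_\bullet(I)$ both lie in $I_\init \ot B(\khat) = (\Comp I_\init)\,\vot B(\khat)$, so Theorem~\ref{compareprojcocs}(b) applies with $\vN = \Comp I_\init$: an $H$ as in~\eqref{equivgens} exists iff $P_\al$ and $P_{\al'}$ are equivalent in $(\Comp I_\init)\,\vot B(\noise)$, i.e.\ iff $P_\al \sim P_{\al'}$ in $B(\noise)$; moreover the $H$ so produced again lies in $I_\init \ot B(\khat)$, so $X^H$ is a \emph{local} $\si$-cocycle by Lemma~\ref{locality}. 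Conversely, a local $\si$-cocycle $U$ with $U^*_tU_t = \al_t(I)$ and $U_tU^*_t = \al'_t(I)$ is strongly continuous, hence Markov-regular, with generator in $I_\init \ot B(\khat)$, and the $\mathrm{(i)}\Rightarrow\mathrm{(ii)}$ direction of Theorem~\ref{compareprojcocs}(b) forces $P_\al \sim P_{\al'}$. Thus $\al \sim^\si \al' \Longleftrightarrow P_\al \sim P_{\al'}$, and likewise $\be \sim^\si \be' \Longleftrightarrow P_\be \sim P_{\be'}$.

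Combining these, $P_\al \sim P_{\al'} \les P_\be$ and $P_\be \sim P_{\be'} \les P_\al$, so $P_\al$ and $P_\be$ are mutually subequivalent projections in the von Neumann algebra $B(\noise)$. By the Cantor--Schr\"oder--Bernstein theorem for projections (equivalently, by comparing the dimensions of their ranges) this yields $P_\al \sim P_\be$. Running the $\mathrm{(ii)}\Rightarrow\mathrm{(i)}$ direction of Theorem~\ref{compareprojcocs}(b) with $F,G$ now the generators of $\al_\bullet(I)$ and $\be_\bullet(I)$ then produces a local $\si$-cocycle $U$ with $U^*_tU_t = \al_t(I)$ and $U_tU^*_t = \be_t(I)$ for all $t$, which is precisely $\al \sim^\si \be$.

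The abstract Cantor--Schr\"oder--Bernstein step is routine; the point demanding care is the dictionary of the third paragraph, namely that the requirement in $\sim^\si$ that the implementing cocycle be \emph{local} corresponds, through Theorem~\ref{compareprojcocs}(b), exactly to Murray--von Neumann equivalence of the noise-space projections, with no leftover condition on the vectors $u_\bullet$ and no constraint on the ambient commutative von Neumann algebra beyond $\Comp I_\init$. Verifying that the $H$ supplied by Theorem~\ref{compareprojcocs}(b) may always be chosen in $I_\init \ot B(\khat)$, so that $X^H$ is genuinely a local cocycle, is the key check.
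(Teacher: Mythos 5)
Your proposal is correct and follows essentially the same route as the paper: translate everything into the noise-space projections via~\eqref{localprojgen}, use Lemma~\ref{projles} with Theorem~\ref{compareprojcocs}(a) for the subordinations and Theorem~\ref{compareprojcocs}(b) (with $\vN = \Comp I_\init$, so that Lemma~\ref{locality} gives locality of $X^H$) to identify $\sim^\si$ with Murray--von Neumann equivalence, then conclude by Schr\"oder--Bernstein for projections in $B(\noise)$. The only difference is that you spell out the locality/Markov-regularity dictionary in more detail than the paper, which simply cites part~(b) for the equivalence $\al \sim^\si \be \Iff P_\al \sim P_\be$.
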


\begin{proof}
Let $\al, \be \in \Sub(\si)$ with associated projection-valued, local
$\si$-cocycles $X^\al$ and $X^\be$, whose stochastic generators are
$F_\al$ and $F_\be$, where
\[
F_\al = I_\init \ot \begin{bmatrix} -\norm{u_\al}^2 & \bra{u_\al} \\
\ket{u_\al} & -P_\al^\perp \end{bmatrix}, \quad F_\be = I_\init \ot
\begin{bmatrix} -\norm{u_\be}^2 & \bra{u_\be} \\ \ket{u_\be} &
-P_\be^\perp \end{bmatrix},
\]
in the notation~\eqref{localprojgen}. By part~(b) of
Theorem~\ref{compareprojcocs} we have $\al \sim^\si \be$ if and only
if $P_\al \sim P_\be$, that is if and only if $P_\al$ and $P_\be$ are
equivalent projections.

Now by assumptions there are $\al', \be' \in \Sub(\si)$ such that $\al
\sim^\si \al'$ with $\al' \les \be$, and $\be \sim^\si \be'$ with
$\be' \les \al$. Maintaining the same notation for the generators of
$X^{\al'}$ and $X^{\be'}$, we have, now using part~(a) of
Theorem~\ref{compareprojcocs} as well as Lemma~\ref{projles}, that
\[
P_\al \sim P_{\al'}, \ \quad P_{\al'} \les P_\be, \quad P_\be \sim
P_{\be'} \ \text{ and } \ P_{\be'} \les P_\al.
\]
That is, $P_\al \cles P_\be$ and $P_\be \cles P_\al$, where $\cles$
here denotes subequivalence of projections in $B(\noise)$. But this
shows that $P_\al \sim P_\be$, and so $\al \sim^\si \be$ as required.
\end{proof}

\begin{example}
Let $\{\al^{(r)}:r \in [0,1]\} \subset \Sub(\si^{\init,\kil})$ be the
$E$-semigroups from Example~\ref{L2eg} where $\init = \Comp$, $\kil =
L^2[0,1]$. Now $P_r \sim I$ for each $r \in ]0,1]$ (since $P_r = T^*_r
T_r$ and $T_r T^*_r = I$ where $(T_rf)(s) = \sqrt{r} f(rs)$), and so it
follows that $\al^{(r)} \sim^\si \si$.
\end{example}

\par\bigskip\noindent
{\bf Acknowledgment.} Comments from Martin Lindsay, Michael Skeide and
the referee received during the writing of this paper resulted in
several useful improvements and clarifications.

\end{document}